%% file: ViT-K-SIAM.tex
\documentclass[review,hidelinks,onefignum,onetabnum]{siamart251216}

\usepackage{booktabs}   
\usepackage{multirow}   
\usepackage{amsmath}
\usepackage{amssymb}

\usepackage{graphicx}   
\usepackage{subcaption} 
\usepackage{tikz}

\usetikzlibrary{
	shapes.geometric,   
	arrows.meta,        
	positioning,        
	shadows,            
	fit,                
	backgrounds         
}

\usepackage{xcolor} 

\usepackage{algorithm}
\nolinenumbers
\input{ex_shared}

\ifpdf
\hypersetup{
  pdftitle={ViT-K: A Few-Shot Learning Model for Coupled Fluid–Porous Media Flows with Interface Conditions},
  pdfauthor={M. Chen, C. Qiu, Z. Mao and M. Xu}
}
\fi

\begin{document}

\maketitle

\begin{abstract}
	The numerical simulation of interaction between free flow and porous media, governed by coupled Stokes/Navier--Stokes--Darcy flows, is critical for understanding fluid filtration and physiological transport, yet it is hindered by the high computational cost of resolving interface heterogeneities and the instability of long-term predictions. While deep learning offers surrogate modeling potential, existing frameworks often suffer from exponential error accumulation and poor convergence in multi-physics regimes. To address these limitations, we propose ViT-K, a novel few-shot learning model designed to learn the spatiotemporal evolution of coupled flows from sparse datasets. The ViT-K framework effectively reconstructs the global flow physics on a low-dimensional manifold by combining Vision Transformers (ViT) to capture heterogeneous interfacial features with the Koopman operator to linearize temporal dynamics. By lifting nonlinear dynamics into a globally linear observable space, the ViT-K model provides stability by design, ensuring that prediction errors grow linearly rather than exponentially over time. This theoretical property enables reliable long-term extrapolation even in small-sample regimes. Numerical experiments on benchmark coupled systems demonstrate that ViT-K not only captures complex interface physics with high fidelity but also exhibits exceptional robustness against measurement noise by acting as an implicit spectral filter. The proposed method significantly outperforms traditional solvers in inference speed while maintaining physical consistency, offering a robust paradigm for real-time multiphysics forecasting.
\end{abstract}

\begin{keywords}
	Vision Transformer; Koopman Operator;  Coupled Stokes/Navier--Stokes--Darcy; Few-Shot Learning
\end{keywords}

\begin{MSCcodes}
65M15, 65N12, 65N35
\end{MSCcodes}

\section{Introduction}
\label{sec:intro}

Modeling the interaction between viscous free flow and porous media seepage governed by the coupled Stokes--Darcy or Navier--Stokes--Darcy (NSD) systems remains a central topic in computational mechanics. In the past decade, these models have been extensively investigated due to their critical role in diverse contexts, ranging from groundwater hydrology \cite{li2016fully,discacciati2004domain,hoppe2007computational} and karst aquifer systems \cite{cao2010finite,li2018stabilized} to petroleum recovery \cite{hanspal2006numerical,zhang2019fabrication} and biofluid transport \cite{zingaro2023comprehensive}. Owing to their inherent nonlinear coupling and multiscale nature, these systems have become canonical benchmarks for studying multiphysics flow interactions and for developing advanced numerical and data-driven modeling techniques. Significant efforts have been devoted to the numerical resolution of coupled systems, establishing a broad spectrum of robust solvers. Prominent among these are finite element and mixed finite element methods \cite{mardal2002robust,layton2002coupling,arbogast2007computational,camano2015new,gatica2009conforming,riviere2005locally,discacciati2007robin,qiu2020domain}, as well as finite difference and finite volume schemes \cite{hanspal2006numerical,li2018stabilized,lipnikov2014discontinuous}. Despite the mathematical rigor of these traditional approaches, they are inherently constrained by the computational burden of high-fidelity mesh generation. This bottleneck becomes critical in dynamic, long-duration simulations, motivating a shift towards efficient data-driven alternatives and surrogate models.

Amid the rapid advancement of scientific machine learning, physics-informed neural networks (PINNs) \cite{cai2021physics,Mao-2020} have emerged as a dominant paradigm for solving fluid dynamics problems. For coupled Stokes--Darcy systems, \cite{pu2022physics} employed monolithic PINNs with soft constraints to approximate global solutions. However, to mitigate the optimization pathology inherent in monolithic formulations dealing with multi-physics stiffness, the field has shifted toward domain-decomposition strategies. Notable developments include coupled neural-network frameworks \cite{yue2023efficient}, stochastic extensions \cite{li2025mc}, and hard-constraint parallel solvers \cite{lu2025high}, all of which significantly enhance numerical stability near complex interfaces. Complementing these instance-specific solvers, neural operators have emerged as a powerful paradigm for learning mesh-independent mappings of parametric PDEs. Foundational architectures, such as the Fourier neural operator (FNO) \cite{li2020fourier} and DeepONet \cite{lu2021learning}, along with advanced variants like U-NO \cite{rahman2022u} and PINO \cite{li2024physics}, have revolutionized the field. However, these operator-learning frameworks were not developed for solving coupled Navier–Stokes–Darcy systems with interface conditions effectively.

Despite these advances, the extension of deep learning paradigms to complex coupled systems remains an under-explored frontier. Specifically, the field confronts three fundamental hurdles: (i) \textbf{Training failure.} In regimes characterized by strong nonlinearities, PINNs frequently encounter convergence issues due to ill-conditioned loss landscapes \cite{krishnapriyan2021characterizing}. (ii) \textbf{Exponential accumulation of prediction errors.} Certain operator learning models are prone to severe error propagation during long-term temporal integration, where prediction errors grow exponentially when extrapolating beyond the training horizon \cite{takamoto2022pdebench, lippe2023pde}. (iii) \textbf{Prohibitive computational costs.} The substantial resource demands associated with training in large-scale and multi-scale domains continue to hinder practical engineering deployment \cite{wang2025time}.

To address the aforementioned challenges, we propose a novel data-driven framework, termed ViT-K, which synergistically integrates the spatial representation power of vision transformers (ViT) with the global linearization capabilities of the Koopman operator. As shown in Figure \ref{fig:vit-k-final}, on the spatial front, ViT excels at capturing global dependencies within high-dimensional flow fields. Unlike traditional convolutional architectures, its self-attention mechanism is particularly adept at learning the heterogeneous features across the fluid-porous interface in coupled NSD systems. Recent studies \cite{kang2023new,reddy2025twins} have demonstrated that ViT-based encoders can effectively extract global spatial modes. On the temporal front, the Koopman operator provides a rigorous theoretical basis for linearizing nonlinear dynamical systems—a critical attribute for long-term stability. By lifting the nonlinear dynamics into an infinite-dimensional observable space, the Koopman operator effectively mitigates error propagation issues \cite{brunton2021modern}. Crucially, this global linearization imparts intrinsic stability by design, empowering the ViT-K model to achieve reliable long-term extrapolation even in small-sample regimes. Furthermore, the framework acts as an implicit spectral filter, effectively attenuating stochastic perturbations and maintaining high predictive stability even when observational data is corrupted.

Consequently, the ViT-K framework facilitates robust long-term forecasting, offering a distinct dual advantage: it synergistically leverages ViT to resolve complex spatial correlations with high fidelity, while employing Koopman dynamics to ensure stable temporal extrapolation. By transforming the nonlinear evolution into a stable, linear trajectory within a latent space, this approach not only circumvents the prohibitive computational overhead associated with explicit PDE solvers but also significantly accelerates numerical inference.

The rest of this paper is organized as follows. In \cref{sec:model}, we present the coupled Stokes--Darcy and Navier--Stokes--Darcy models. \Cref{sec:vit_k_framework} introduces the ViT-K framework, while the structured Koopman formulation and its stability properties are analyzed in \cref{sec:koopman_theory}. \Cref{sec:numerical} provides numerical experiments demonstrating the accuracy and long-term stability of the proposed method. Finally, \cref{sec:conclusion} concludes the paper.

\begin{figure}[h]
	\centering
	\resizebox{\textwidth}{!}{%
		\begin{tikzpicture}[
			node distance=1.2cm and 1.5cm, 
			font=\sffamily,
			process/.style={
				rectangle, 
				draw=blue!60!black, 
				fill=blue!5, 
				thick, 
				rounded corners, 
				minimum height=1.2cm, 
				minimum width=2.4cm, 
				align=center, 
				drop shadow={opacity=0.15, shadow xshift=2pt, shadow yshift=-2pt}, 
				font=\small
			},
			operator/.style={
				circle, 
				draw=green!60!black, 
				fill=green!5, 
				thick, 
				minimum size=1.6cm, 
				align=center, 
				drop shadow={opacity=0.15, shadow xshift=2pt, shadow yshift=-2pt},
				font=\small
			},
			data/.style={
				trapezium, 
				trapezium left angle=75, 
				trapezium right angle=105, 
				draw=orange!60!black, 
				fill=orange!5, 
				thick, 
				minimum height=1.1cm, 
				inner sep=5pt, 
				align=center, 
				drop shadow={opacity=0.15, shadow xshift=2pt, shadow yshift=-2pt},
				font=\small
			},
			arrow_label/.style={
				midway,          
				above,           
				font=\scriptsize\bfseries, 
				text=black,
				yshift=1pt       
			},
			arrow/.style={-Stealth, thick, color=gray!30!black},
			group_title/.style={
				font=\bfseries\scriptsize, 
				color=gray!60!black,
				yshift=2pt 
			}
			]
			
			\node[data] (input) {\textbf{Input} $X_t$\\ \scriptsize (Flow Field)};
			\node[process, right=of input] (vit) {\textbf{ViT}\\ \scriptsize Self-Attention};
			\node[operator, right=of vit] (koopman) {$\mathcal{K}$\\ \scriptsize Operator};
			\node[operator, right=of koopman] (next_latent) {$g_{t+1}$\\ \scriptsize State};
			\node[process, right=of next_latent] (decoder) {\textbf{Decoder}\\ \scriptsize Reconstruction};
			\node[data, right=of decoder] (output) {\textbf{Output} $X_{t+1}$\\ \scriptsize (Stable)};
			
			\draw[arrow] (input) -- (vit);
			
			\draw[arrow] (vit) -- node[arrow_label] {$\varphi(\cdot)$} (koopman);
			
			\draw[arrow] (koopman) -- node[arrow_label] {Linear} (next_latent);
			
			\draw[arrow] (next_latent) -- node[arrow_label] {$\varphi^{-1}(\cdot)$} (decoder);
			
			\draw[arrow] (decoder) -- (output);
			
			\begin{scope}[on background layer]
				\node[fit=(input)(vit), 
				draw=blue!20, 
				fill=blue!2, 
				dashed, 
				rounded corners=10pt, 
				inner sep=10pt] (spatial_box) {};
				\node[group_title, above] at (spatial_box.north) {Spatial Front};
				
				\node[fit=(koopman)(next_latent), 
				draw=green!20, 
				fill=green!2, 
				dashed, 
				rounded corners=10pt, 
				inner sep=10pt] (temporal_box) {};
				\node[group_title, above] at (temporal_box.north) {Temporal Front};
			\end{scope}
			
			\draw[arrow, dashed, red!60, thick] (input.south) to[out=-25, in=205] 
			node[below, align=center, font=\scriptsize, text=red!70, fill=white, inner sep=2pt, yshift=-1pt] 
			{Standard Recursive (Unstable)} (output.south);
			
		\end{tikzpicture}%
	}
	\caption{The ViT-K Framework Architecture.}
	\label{fig:vit-k-final}
	\vspace{-2em}
\end{figure}

\section{Model Problems}
\label{sec:model}


Consider a bounded domain $\Omega \subset \mathbb{R}^d$ ($d \in \{2,3\}$) partitioned into a porous medium region $\Omega_D$ and a free-flow fluid region $\Omega_S$, separated by a common interface $\Gamma = \overline{\Omega}_D \cap \overline{\Omega}_S$. Let $\vec{n}_S$ and $\vec{n}_D$ denote the unit outward normal vectors on $\Gamma$ for $\Omega_S$ and $\Omega_D$, respectively.

\begin{figure}[htp]
	\centering
	\includegraphics[width=0.85\linewidth]{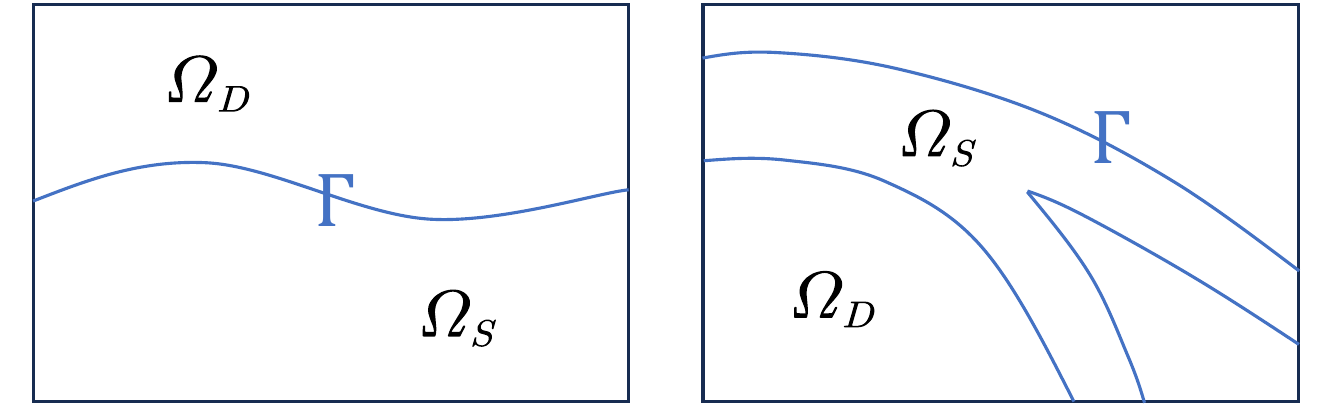}
	\caption{Schematic representation of the computational domain $\Omega$, consisting of the porous medium $\Omega_D$ and the fluid region $\Omega_S$ separated by the interface $\Gamma$.}
	\label{fig:domain}
	\vspace{-2em}
\end{figure}

In the porous medium $\Omega_D$, the flow dynamics are governed by mass conservation and Darcy's law. The hydraulic head $\phi_D$ and the filtration velocity $\vec{u}_D$ satisfy
\begin{subequations}
	\begin{align}
		S_0 \frac{\partial \phi_D}{\partial t} + \nabla \cdot \vec{u}_D &= f_D \quad &\text{in } \Omega_D \times (0, T], \label{eq:mass_cons} \\
		\vec{u}_D &= -\mathbb{K} \nabla \phi_D \quad &\text{in } \Omega_D \times (0, T], \label{eq:darcy_law}
	\end{align}
\end{subequations}
where $S_0$ is the specific storage coefficient, $\mathbb{K}$ is the hydraulic conductivity tensor (assumed isotropic, $\mathbb{K} = k\mathbb{I}$), and $f_D$ represents the source/sink term. The hydraulic head is defined as $\phi_D = z + p_D/(\rho g)$, with $p_D$ being the dynamic pressure. Substituting \eqref{eq:darcy_law} into \eqref{eq:mass_cons} yields the parabolic formulation
\begin{equation}
	S_0 \frac{\partial \phi_D}{\partial t} - \nabla \cdot (\mathbb{K} \nabla \phi_D) = f_D. \label{eq:darcy_second_order}
\end{equation}

In the free-flow region $\Omega_S$, the fluid velocity $\vec{u}_S$ and kinematic pressure $p_S$ are governed by the time-dependent Stokes equations:
\begin{subequations}
	\begin{align}
		\frac{\partial \vec{u}_S}{\partial t} - \nabla \cdot \mathbb{T}(\vec{u}_S, p_S) &= \vec{f}_S \quad &\text{in } \Omega_S \times (0, T], \label{eq:stokes_momentum} \\
		\nabla \cdot \vec{u}_S &= 0 \quad &\text{in } \Omega_S \times (0, T]. \label{eq:stokes_continuity}
	\end{align}
\end{subequations}
Here, $\mathbb{T}(\vec{u}_S, p_S) = 2\nu \mathbb{D}(\vec{u}_S) - p_S\mathbb{I}$ denotes the kinematic fluid stress tensor, where $\nu$ is the kinematic viscosity, and $\mathbb{D}(\vec{u}_S) = \frac{1}{2}(\nabla \vec{u}_S + (\nabla \vec{u}_S)^{\mathsf{T}})$ is the rate of strain tensor.

To close the coupled system, we impose physical coupling conditions on the interface $\Gamma$. These include the continuity of normal flux, the balance of normal forces, and a Beavers--Joseph condition:
\begin{subequations}
	\label{eq:interface_conditions}
	\begin{align}
		\vec{u}_S \cdot \vec{n}_S + \vec{u}_D \cdot \vec{n}_D &= 0, ~~
		-\vec{n}_S \cdot \mathbb{T}(\vec{u}_S, p_S) \cdot \vec{n}_S = g(\phi_D - z), \label{eq:normal_stress} \\
		-\vec{\tau}_j \cdot (\mathbb{T}(\vec{u}_S, p_S) \cdot \vec{n}_S) &= \frac{\alpha\nu\sqrt{d}}{\sqrt{\text{trace}({\Pi})}} \vec{\tau}_j \cdot (\vec{u}_S - \vec{u}_D), \quad j=1,\dots,d-1, \label{eq:full_BJ}
	\end{align}
\end{subequations}
where $g$ is the gravitational acceleration and $\{\vec{\tau}_j\}$ form an orthonormal basis for the tangent plane on $\Gamma$. $\alpha$ denotes the dimensionless Beavers--Joseph slip coefficient and ${\Pi} = (\mathbb{K}\nu)/g$ is the intrinsic permeability tensor. This condition accounts for the shear stress jump induced by the porous structure at the interface. Note that condition \eqref{eq:full_BJ} can be degenerated to a simplified Beavers--Joseph condition $\vec{u}_S \cdot \vec{\tau}_j = 0$ (effectively a no-slip condition).

Regarding boundary conditions on $\partial\Omega \setminus \Gamma$, we consider standard Dirichlet conditions. For the validation of numerical schemes using manufactured solutions, inhomogeneous Dirichlet data consistent with the exact solution are imposed.

In the Navier--Stokes--Darcy Model, the Stokes system in $\Omega_S$ is replaced by the Navier--Stokes equations to account for inertial effects in flows with higher Reynolds numbers. 
\begin{subequations}
	\begin{align}
		\frac{\partial \vec{u}_S}{\partial t} + (\vec{u}_S \cdot \nabla)\vec{u}_S - \nabla \cdot \mathbb{T}(\vec{u}_S, p_S) &= \vec{f}_S, \quad &\text{in } \Omega_S \times (0, T], \label{eq:N-stokes_momentum} \\
		\nabla \cdot \vec{u}_S &= 0 \quad &\text{in } \Omega_S \times (0, T]. \label{eq:N-stokes_continuity}
	\end{align}
\end{subequations}
The governing equations in the porous medium $\Omega_D$ remain identical to \eqref{eq:darcy_second_order}.

\section{The ViT-K Framework: Vision Transformer with Koopman Operator}\label{sec:vit_k_framework}

This section details the proposed ViT-K framework, a hybrid architecture that combines the spatial feature extraction capabilities of vision transformers (ViT) with the temporal extrapolation power of Koopman operator theory. By integrating these paradigms, the ViT-K is designed to effectively predict the evolving states of coupled Stokes--Darcy and Navier--Stokes--Darcy systems.

\subsection{Spatial Encoding via Vision Transformer}
\label{subsec:vit_structure}

The vision transformer, originally introduced by \cite{dosovitskiy2020image}, adapts the transformer architecture—traditionally used for natural language processing—to visual data. Unlike convolutional neural networks (CNNs), which rely on local receptive fields, ViT leverages a multi-head self-attention mechanism to capture global spatial dependencies \cite{touvron2021training,vaswani2017attention}. This characteristic is particularly advantageous for multiphysics modeling, as it allows for the effective capture of long-range interactions inherent in coupled fluid--porous media fields.

Consider a snapshot of the flow field represented as a 2D input tensor $\mathbf{X} \in \mathbb{R}^{H \times W \times C}$, where $H, W$ denote the spatial dimensions and $C$ represents the number of physical variables (channels). The typical ViT framework is shown in Figure \ref{fig:ViT}. The ViT encoder processes this input through the following steps:

\textbf{Patch Embedding:} The input $\mathbf{X}$ is partitioned into a sequence of $N$ flattened patches $\mathbf{X}_p \in \mathbb{R}^{N \times (P^2 \cdot C)}$, where $P$ is the patch size and $N = (HW)/P^2$ is the number of patches (assuming $H, W$ are divisible by $P$). These patches are linearly projected into a latent dimension $D$ via a learnable embedding matrix $\mathbf{E} \in \mathbb{R}^{(P^2 \cdot C) \times D}$.

\textbf{Positional Encoding and Tokenization:} To preserve spatial topology, learnable positional embeddings $\mathbf{E}_{pos}$ are added. Following the standard ViT protocol, a learnable \texttt{[class]} token $\mathbf{x}_{class} \in \mathbb{R}^{1 \times D}$ is prepended to the sequence. This token serves as a global aggregator of information. The initial input sequence $\mathbf{Z}_0 \in \mathbb{R}^{(N+1) \times D}$ is formed as $
	\mathbf{Z}_0 = \left[ \mathbf{x}_{class}; \, \mathbf{x}_p^1 \mathbf{E}; \, \dots; \, \mathbf{x}_p^N \mathbf{E} \right] + \mathbf{E}_{pos}$,
where the bracket notation $[\cdot ; \cdot]$ denotes row-wise concatenation.

\textbf{Transformer Encoder:} The sequence $\mathbf{Z}_0$ passes through $L$ Transformer layers. Each layer consists of Multi-Head Self-Attention (MSA) and a Feed-Forward Network (FFN), stabilized by Layer Normalization (LN) and residual connections:
\begin{align}
	\mathbf{Z}'_l &= \text{MSA}(\text{LN}(\mathbf{Z}_{l-1})) + \mathbf{Z}_{l-1},\qquad
	\mathbf{Z}_l &= \text{FFN}(\text{LN}(\mathbf{Z}'_l)) + \mathbf{Z}'_l, \quad l = 1, \dots, L.
\end{align}
The final output $\mathbf{Z}_L$ contains the encoded representation of the flow field. In our framework, the aggregated feature corresponding to the \texttt{[class]} token, $\mathbf{g} = \mathbf{Z}_L^{(0)} \in \mathbb{R}^{d}$, serves as the compact latent state vector for the subsequent Koopman analysis.

For context, in the canonical classification setting (which we adapt for regression), the output state of the \texttt{[class]} token $\mathbf{z}_L^0$ serves as the global representation. A Multi-Layer Perceptron (MLP) head maps this representation to logits $\mathbf{h} = \text{MLP}(\text{LN}(\mathbf{z}_L^0)) \in \mathbb{R}^K$. The final class probabilities are obtained via the Softmax function. The model is typically trained by minimizing the Cross-Entropy loss:
\begin{equation}
	\mathcal{L}_{\text{CE}} = -\frac{1}{N_{\text{batch}}} \sum_{i=1}^{N_{\text{batch}}} \sum_{k=1}^{K} y_{i,k} \log(\hat{y}_{i,k}),
\end{equation}
where $y_{i,k}$ is the one-hot encoded ground-truth label for the $i$-th sample, and $\hat{y}_{i,k}$ is the predicted probability. $K$ is the number of classes. While this architecture has achieved remarkable success in discrete classification, our ViT-K framework replaces this classification head with a Koopman-based dynamical evolution module, as detailed in Section \ref{subsec:vit_k_architecture}.

\begin{algorithm}[t]
	\footnotesize
	\caption{Standard Vision Transformer (ViT)}
	\label{alg:vit}
	\begin{algorithmic} 
		\REQUIRE Input image $\mathbb{X} \in \mathbb{R}^{H \times W \times C}$; Patch size $P$; embedding dimension $D$; Number of layers $L$.
		\ENSURE Class prediction $\boldsymbol{y} \in \mathbb{R}^{K}$.
		
		\STATE \textbf{Phase 1: Patch Partition \& Embedding}
		\STATE Divide $\mathbb{X}$ into $N = HW/P^2$ non-overlapping patches,
		\STATE Flatten patches to $\boldsymbol{x}_p \in \mathbb{R}^{N \times (P^2 C)}$ and project: $\mathbb{Z}_p = \boldsymbol{x}_p \mathbb{E}$,
		
		\STATE \textbf{Phase 2: Tokenization \& Position Encoding}
		\STATE Initialize learnable class token $\boldsymbol{x}_{\mathrm{class}} \in \mathbb{R}^{1 \times D}$,
		\STATE $\mathbb{Z}_0 = [\boldsymbol{x}_{\mathrm{class}}; \mathbb{Z}_p] + \mathbb{E}_{\mathrm{pos}}$, where $\mathbb{E}_{\mathrm{pos}} \in \mathbb{R}^{(N+1) \times D}$,
		
		\STATE \textbf{Phase 3: Transformer Encoder}
		\FOR{$l = 1$ \TO $L$}
		\STATE $\mathbb{Z}'_l = \mathrm{MSA}(\mathrm{LN}(\mathbb{Z}_{l-1})) + \mathbb{Z}_{l-1}$ \COMMENT{Multi-head Self-Attention},
		\STATE $\mathbb{Z}_l = \mathrm{FFN}(\mathrm{LN}(\mathbb{Z}'_l)) + \mathbb{Z}'_l$ \COMMENT{Feed-forward Network},
		\ENDFOR
		
		\STATE \textbf{Phase 4: Classification Head}
		\STATE Extract global representation: $\boldsymbol{z}_L^0 = \mathbb{Z}_L$,
		\STATE Compute prediction: $\boldsymbol{y} = \mathrm{Linear}(\mathrm{LN}(\boldsymbol{z}_L^0))$,
		\RETURN $\boldsymbol{y}$.
	\end{algorithmic}
\end{algorithm}

\begin{figure}[htp]
	\centering
	\includegraphics[width=0.6\linewidth]{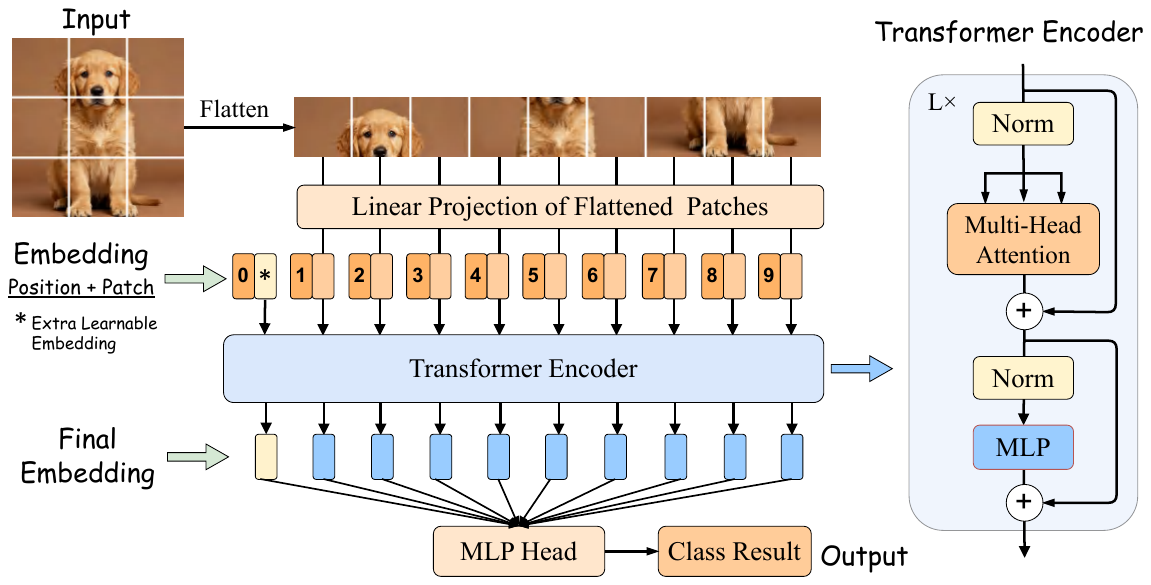} 
	\caption{Schematic of the standard Vision Transformer architecture.}
	\label{fig:ViT}
	\vspace{-3em}
\end{figure}

\subsection{Temporal Evolution via Koopman Operator}
\label{subsec:koopman_operator}

For continuous-time dynamical systems governing the Stokes-Darcy or Navier-Stokes-Darcy flows, the state evolution $x(t+\Delta t) = \Phi_{\Delta t}(x(t))$ is inherently nonlinear. This nonlinearity poses significant challenges for long-horizon extrapolation directly in the physical state space.

The Koopman operator theory~\cite{koopman1931hamiltonian,mezic2005spectral} circumvents this by lifting the dynamics into an infinite-dimensional space of observables, where the evolution becomes linear. Let $\mathcal{X}$ be the state space and $\mathcal{G}$ be a Banach space of scalar observables $g: \mathcal{X} \to \mathbb{C}$. The Koopman operator $\mathcal{K}_{\Delta t}$ acts on observables as $(\mathcal{K}_{\Delta t} g)(x) = g(\Phi_{\Delta t}(x))$.
Its continuous-time infinitesimal generator $\mathcal{L}$ satisfies:
\begin{equation}
	\frac{d}{dt} g(\Phi_t(x)) = (\mathcal{L}g)(\Phi_t(x)), \quad \text{with} \quad \mathcal{K}_{\Delta t} = e^{\Delta t \mathcal{L}}.
\end{equation}
While $\Phi_t$ is nonlinear in $\mathcal{X}$, both $\mathcal{K}_{\Delta t}$ and $\mathcal{L}$ act linearly on observables $g\in\mathcal{G}$. Crucially, if an observable lies in the span of the eigenfunctions of $\mathcal{L}$ with eigenvalue $\omega$, i.e., $\mathcal{L}\phi = \omega\phi$, its time evolution is simply $\phi(\Phi_t(x)) = e^{\omega t}\phi(x)$. This implies that highly complex nonlinear dynamics can be decomposed into a superposition of exponential modes in the observable space.

\subsection{ViT-K Architecture for Coupled Systems}
\label{subsec:vit_k_architecture}
While standard Vision Transformers (ViT) excel at spatial feature extraction, their discrete classification architecture is ill-suited for continuous fluid dynamics regression. To overcome the numerical instability and error accumulation inherent in simulating coupled Stokes/Navier-Stokes-Darcy (NSD) systems, we propose the ViT-K framework. By discarding traditional classification components (e.g., Softmax and Cross-Entropy loss), we repurpose the ViT encoder as a lifting function that maps nonlinear spatio-temporal dynamics into a Koopman-invariant observable space. This transformation into a globally linear latent space governed by a structured operator ensures stable and interpretable predictions of coupled fluid evolution.

The overall architecture is illustrated in Figure \ref{fig:ViT-K}. The algorithm of ViT-K are shown in Algorithm \ref{alg:vit-k}. The framework consists of three key modules:
\begin{figure}
	\centering
	\includegraphics[width=0.8\linewidth]{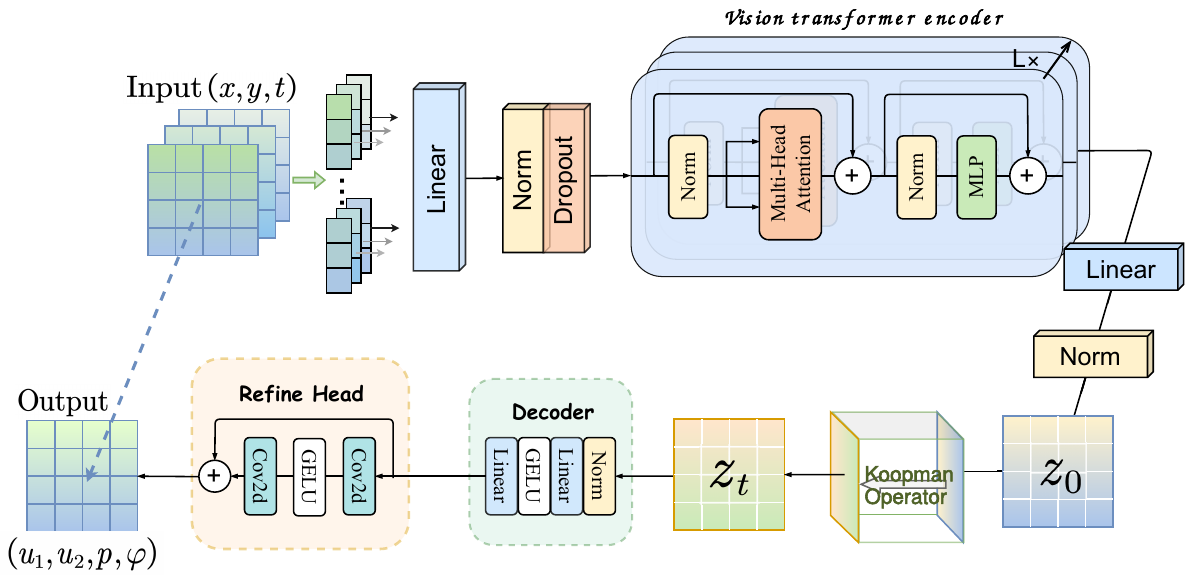}
	\caption{Architecture of the proposed ViT-K framework. The ViT encoder extracts spatial observables from NSD snapshots, and the Koopman operator advances these observables linearly in time.}
	\label{fig:ViT-K}
	\vspace{-3em}
\end{figure}

\textbf{(1) Spatial Encoding (Modified ViT):} 
	We utilize the backbone of the ViT encoder described in Section \ref{subsec:vit_structure}. The input flow fields (velocity $\vec{u}$, pressure $p$, potential $\phi$) are patch-embedded and processed by the Transformer layers. However, instead of feeding the final \texttt{[class]} token $\mathbf{z}_L^0$ into a classifier, we treat it as the observable vector $\mathbf{g}(t) \in \mathbb{R}^d$ in the Koopman analysis. The ViT Encoder acts as the lifting function $\Psi_{enc}$, mapping the physical fields (velocity, pressure, potential) into a compact latent vector $\mathbf{g} \in \mathbb{R}^d$. The multi-head attention mechanism ensures that global coupling effects between $\Omega_S$ and $\Omega_D$ are encoded efficiently.

\textbf{(2) Temporal Evolution (Koopman Layer):} A learnable finite-dimensional linear operator $\mathbf{A} \approx \mathcal{L}$ advances the latent state: $\mathbf{g}(t+\Delta t) = e^{\mathbf{A}\Delta t}\mathbf{g}(t)$ which replaces the discrete Softmax output with a continuous temporal trajectory. This operator advances the latent states linearly in time, replacing the nonlinear dynamics of the original PDE system. Such linear evolution ensures stability and interpretability and allows extrapolation over long temporal horizons.

\textbf{(3) Physical Reconstruction (Decoder and Refinement):} A reconstruction network $\Psi_{dec}$ maps the evolved latent states $\mathbf{g}(t+\Delta t)$ back to the physical domain, recovering the full fields $\hat{\mathbf{u}}, \hat{p}, \hat{\phi}$. A combination of linear projection and local convolutional refinement, as shown in Refine Head in the Figure \ref{fig:ViT-K}, ensures consistency between global structures and fine spatial details.

\begin{algorithm}[t]
	\caption{Algorithm of ViT-K}
	\label{alg:vit-k}
	\begin{algorithmic} 
		\REQUIRE Discrete initial field $\mathbb{U}_h(\boldsymbol{x}, t_0) \in \mathbb{R}^{C}$ on domain $\Omega$; Spatial grid coordinates $\mathbb{X}_{\text{grid}}$; Target time sequence $\mathcal{T}=\{t_1, \dots, t_M\}$.
		\ENSURE Approximated solution trajectory $\{\hat{\mathbb{U}}_h(\boldsymbol{x}, t_k)\}_{k=1}^M$.
		
		\STATE \textbf{Phase 1: Spatial Encoding (Modified ViT)}
		\STATE \textbf{Harmonic Embedding:} Augment the initial condition with positional encodings $\gamma(\mathbb{X}_{\text{grid}})$ to capture multi-scale spatial frequencies.
		\STATE \hspace*{\algorithmicindent} $\mathbb{I}_0 \leftarrow \mathrm{Concat}\left(\mathbb{U}_h(\boldsymbol{x}, t_0), \gamma(\mathbb{X}_{\text{grid}})\right)$
		
		\STATE \textbf{Global Encoding:} Map the augmented input $\mathbb{I}_0$ into the Koopman invariant subspace $\mathcal{H} \cong \mathbb{R}^{d}$ using the lifting function $\Psi_{\text{enc}}$.
		\STATE \hspace*{\algorithmicindent} $\boldsymbol{g}(t_0) \leftarrow \Psi_{\text{enc}}(\mathrm{PatchEmbed}(\mathbb{I}_0)) \in \mathbb{R}^{d}$
		
		\STATE \textbf{Phase 2: Temporal Evolution (Koopman Layer)}
		\FOR{\textbf{each} target time $t_k \in \mathcal{T}$}
		\STATE Compute time horizon $\tau_k = t_k - t_0$.
		\STATE \textbf{Spectral Evolution:} Advance latent observable via matrix exponential of linear operator $\mathbb{A}$ (approximating Koopman operator $\mathcal{K}^{\tau_k}$).
		\STATE \hspace*{\algorithmicindent} $\boldsymbol{g}(t_k) \leftarrow \exp\left(\mathbb{A} \cdot \tau_k\right) \boldsymbol{g}(t_0)$,
		\STATE \textit{Note: $\mathbb{A}$ models coupled oscillatory and dissipative dynamics.}
		\ENDFOR
		
		\STATE \textbf{Phase 3: Physical Reconstruction}
		\FOR{\textbf{each} evolved latent state $\boldsymbol{g}(t_k)$}
		\STATE \textbf{Decoding:} Project $\boldsymbol{g}(t_k)$ back to patch space via $\Psi_{\text{dec}}$ for coarse approximation $\tilde{\mathbb{U}}$.
		\STATE \textbf{Residual Refinement:} Apply convolutional operator $\Psi_{\text{ref}}$ to recover interface details.
		\STATE \hspace*{\algorithmicindent} $\hat{\mathbb{U}}_h(\boldsymbol{x}, t_k) \leftarrow \Psi_{\text{ref}}\left( \mathrm{Reshape}(\Psi_{\text{dec}}(\boldsymbol{g}(t_k))) \right)$,
		\ENDFOR
		
		\STATE \textbf{Training Optimization (if applicable)}
		\IF{training mode}
		\STATE Minimize domain-weighted functional $\mathcal{J}$ over domains $\Omega_S$ and $\Omega_D$:
		\STATE \hspace*{\algorithmicindent} $\mathcal{J}(\theta) = \sum_{k=1}^{M} \left[ \sum_{v \in \{\boldsymbol{u}, p\}} w_v \| \hat{v} - v \|_{\Omega_S}^2 + w_{\phi} \| \hat{\phi} - \phi \|_{\Omega_D}^2 \right]$,
		\STATE Update parameters $\theta$ via backpropagation.
		\ENDIF

		\RETURN Predicted fields $\{\hat{\mathbb{U}}_h(\boldsymbol{x}, t_k)\}_{k=1}^M$.
	\end{algorithmic}
\end{algorithm}

Since the task is no longer classification, the cross-entropy loss ($\mathcal{L}_{\text{CE}}$) is replaced by a physics-aware regression objective. The total loss function minimizes the reconstruction error over the fluid ($\Omega_S$) and porous ($\Omega_D$) subdomains, combined with a linearity constraint for the Koopman operator. To ensure physical consistency across the heterogeneous domains, we employ a domain-weighted reconstruction loss. In the fluid subdomain $\Omega_S$ and the porous subdomain $\Omega_D$, which have grid counts $N_{\Omega_S}$ and $N_{\Omega_D}$, respectively, the total training objective is:
\begin{equation}
	\begin{split}
		\mathcal{L}_{total} &= w_{u_1}\mathrm{MSE}_{\Omega_S}(\hat{u}_1, u_1) + w_{u_2}\mathrm{MSE}_{\Omega_S}(\hat{u}_2, u_2) \\
		&\quad + w_{p}\mathrm{MSE}_{\Omega_S}(\hat{p}, p) + w_{\phi}\mathrm{MSE}_{\Omega_D}(\hat{\phi}, \phi) + \lambda \mathcal{L}_{linearity},
	\end{split}
\end{equation}
where the domain-specific MSE is defined as $\mathrm{MSE}_{\Omega}(\hat{f}, f) = \frac{1}{N_{\Omega}} \sum_{\vec{x} \in \Omega} \|\hat{f}(\vec{x}) - f(\vec{x})\|^2$. The term $\mathcal{L}_{linearity}$ enforces the linear dynamics in the latent space and
\begin{equation}
	\mathcal{L}_{\text{linearity}} = \frac{1}{M} \sum_{k=1}^{M} \left\| \Psi_{\text{enc}}(\vec{x}_{k+1}) - \mathrm{e}^{\Delta t_k \mathbb{A}} \Psi_{\text{enc}}(\vec{x}_k) \right\|^2.
	\label{eq:linearity_loss-1}
\end{equation}
This composite loss ensures balanced accuracy across the multiscale interface. This loss computes weighted mean-square errors in each subdomain, ensuring balanced accuracy and physical consistency in both fluid and porous regions. Minimizing $\mathcal{L}_{total}$ jointly optimizes the encoder, the linear operator, and the decoder, achieving unified modeling of NSD dynamics and stable temporal extrapolation.

Overall, ViT-K combines the global spatial representation power of Transformers with the linear and interpretable dynamics of the Koopman framework, enabling accurate, stable, and physically consistent long-term predictions for coupled flow systems.

\section{Structured Koopman Operator: Stability and Error Analysis}
\label{sec:koopman_theory}

Ensuring long-term stability and physical plausibility is a central challenge in data-driven modeling of dynamical systems. Standard approaches that learn a dense, unconstrained Koopman generator often suffer from spectral instability, leading to overfitting of spurious growing modes. To overcome these limitations, we introduce a rigorous \textit{inductive bias} by imposing a physically-motivated structure on the Koopman generator $\mathbb{A}$. Our design is guided by the principle that viscous fluid systems are inherently dissipative and should not spontaneously generate energy.

\subsection{Dissipative Structure via Operator Decomposition}
\label{subsec:general_stable_structure}
Any real matrix $\mathbf{A} \in \mathbb{R}^{d \times d}$ uniquely decomposes as $\mathbf{A} = \mathbf{S} + \mathbf{W}$, where $\mathbf{S}$ is symmetric and $\mathbf{W}$ is skew-symmetric. To guarantee stable, non-expansive learned dynamics, we explicitly constrain $\mathbf{S}$ to be negative semi-definite ($\mathbf{S} \preceq 0$). From a physical perspective, $\mathbf{S} \preceq 0$ governs monotonic energy dissipation, while the conservative component $\mathbf{W}$ captures energy-preserving oscillations and convection.

%

\begin{proposition}[Lyapunov Stability of the Generator]
	\label{prop:general_stability}
	Let the Koopman generator $\mathbf{A} = \mathbf{S} + \mathbf{W}$, where $\mathbf{S} \preceq 0$ is symmetric negative semi-definite and $\mathbf{W}^\top = -\mathbf{W}$ is skew-symmetric. The continuous-time evolution operator $\mathcal{K}_t = e^{t\mathbf{A}}$ is a contraction in the Euclidean norm for all $t \ge 0$, i.e., $\|\mathcal{K}_t\|_2 \le 1$.
\end{proposition}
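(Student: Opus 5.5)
The plan is a direct energy (Lyapunov) argument applied to the linear flow generated by $\mathbf{A}$. Fix an arbitrary initial vector $\mathbf{g}_0 \in \mathbb{R}^d$ and set $\mathbf{g}(t) = e^{t\mathbf{A}}\mathbf{g}_0$. This curve is the unique solution of $\dot{\mathbf{g}} = \mathbf{A}\mathbf{g}$ with $\mathbf{g}(0)=\mathbf{g}_0$. As the Lyapunov function I take the quadratic energy $E(t) = \|\mathbf{g}(t)\|_2^2 = \mathbf{g}(t)^\top \mathbf{g}(t)$.

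The first step is to differentiate $E$:
\begin{equation*}
	\frac{dE}{dt} = \mathbf{g}^\top(\mathbf{A}+\mathbf{A}^\top)\mathbf{g} = 2\,\mathbf{g}^\top \mathbf{S}\,\mathbf{g} + \mathbf{g}^\top(\mathbf{W}+\mathbf{W}^\top)\mathbf{g}.
\end{equation*}
Skew-symmetry gives $\mathbf{W}+\mathbf{W}^\top=0$, so the conservative part drops out exactly. Equivalently, $\mathbf{g}^\top\mathbf{W}\mathbf{g}=0$ for every real vector, which is the algebraic form of the statement that $\mathbf{W}$ is energy-preserving. What remains is $\frac{dE}{dt} = 2\,\mathbf{g}^\top\mathbf{S}\mathbf{g} \le 0$, because $\mathbf{S}\preceq 0$.

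The second step integrates this. Since $E$ is nonincreasing on $[0,\infty)$, we get $\|e^{t\mathbf{A}}\mathbf{g}_0\|_2 \le \|\mathbf{g}_0\|_2$ for all $t\ge0$. Taking the supremum over unit vectors $\mathbf{g}_0$ yields $\|\mathcal{K}_t\|_2 \le 1$, the induced-norm bound claimed in Proposition \ref{prop:general_stability}.

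The argument has no real obstacle; the only point needing care is where the structural hypotheses enter. Everything hinges on recognising that only the symmetric part $\frac12(\mathbf{A}+\mathbf{A}^\top)=\mathbf{S}$ governs the growth of $\|\mathbf{g}\|_2$. This is the statement that the logarithmic norm satisfies $\mu_2(\mathbf{A}) = \lambda_{\max}(\mathbf{S}) \le 0$. An alternative route is the standard bound $\|e^{t\mathbf{A}}\|_2 \le e^{t\mu_2(\mathbf{A})}$, obtained from the Lie--Trotter product formula $e^{t\mathbf{A}} = \lim_n (e^{t\mathbf{S}/n}e^{t\mathbf{W}/n})^n$. Here $e^{t\mathbf{W}/n}$ is orthogonal, and $\|e^{t\mathbf{S}/n}\|_2 \le 1$ because $\mathbf{S}$ is symmetric with nonpositive eigenvalues. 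I would keep the energy-derivative proof as the main line since it is self-contained, and mention this second route only as a cross-check.
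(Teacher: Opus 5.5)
Your proof is correct and is essentially the paper's argument. The paper likewise uses the quadratic Lyapunov function $V(\mathbf{z})=\frac12\|\mathbf{z}\|^2$, observes that $\mathbf{z}^\top\mathbf{W}\mathbf{z}=0$, and so gets $\dot V=\mathbf{z}^\top\mathbf{S}\mathbf{z}\le 0$ and hence $\|e^{t\mathbf{A}}\|_2\le 1$; your explicit supremum over unit vectors and the Lie--Trotter cross-check are valid additions but not needed.
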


\begin{proof}
	Consider the Lyapunov function $V(\mathbf{z}) = \frac{1}{2}\|\mathbf{z}\|^2$ for a state $\mathbf{z}$ evolving according to $\dot{\mathbf{z}} = \mathbf{A}\mathbf{z}$. The time derivative is:
	\begin{equation}
		\frac{d}{dt}V(\mathbf{z}) = \mathbf{z}^\top \mathbf{A} \mathbf{z} = \mathbf{z}^\top (\mathbf{S} + \mathbf{W}) \mathbf{z} = \mathbf{z}^\top \mathbf{S} \mathbf{z} + \mathbf{z}^\top \mathbf{W} \mathbf{z}.
	\end{equation}
	Since $\mathbf{W}$ is skew-symmetric, $\mathbf{z}^\top \mathbf{W} \mathbf{z} = 0$. Since $\mathbf{S} \preceq 0$, we have $\dot{V}(\mathbf{z}) = \mathbf{z}^\top \mathbf{S} \mathbf{z} \le 0$. Thus, the energy is non-increasing, implying $\|\mathbf{z}(t)\| \le \|\mathbf{z}(0)\|$ for all $t \ge 0$, which is equivalent to $\|e^{t\mathbf{A}}\|_2 \le 1$.
\end{proof}

To enhance interpretability and computational efficiency, we further assume the latent observables capture quasi-orthogonal modes (e.g., POD-like modes). We parameterize $\mathbf{A}$ as a block-diagonal matrix of $2 \times 2$ blocks:
\begin{equation}
	\mathbf{A} = \bigoplus_{i=1}^{d/2} \begin{bmatrix} -\gamma_i & -\omega_i \\ \omega_i & -\gamma_i \end{bmatrix}, \quad \text{with } \gamma_i \ge 0.
	\label{eq:block_diagonal_A}
\end{equation}
Here, the learnable parameters $\gamma_i$ and $\omega_i$ explicitly represent the decay rate and frequency of the $i$-th eigenmode. The constraint $\gamma_i \ge 0$ is enforced via a non-negative activation (e.g., Softplus) during training.
It is straightforward to verify that this structure satisfies the conditions of Proposition \ref{prop:general_stability}, guaranteeing unconditional stability.

\subsection{Error Analysis and Convergence}
\label{subsec:error_analysis}

We now analyze the error propagation properties of the proposed method. Our framework can be viewed as an approximation of the true infinite-dimensional Koopman generator $\mathcal{L}$ by a finite-dimensional operator $\mathbf{A}$ on a learned subspace $\mathcal{G}_d$ spanned by the ViT encoder $\Psi_{enc}$.

The key to this approximation lies in the choice of observables. Our ViT encoder, $\Psi_{\text{enc}}: \mathcal{X} \to \mathbb{R}^d$, provides a nonlinear mapping from the high-dimensional state space $\mathcal{X}$ to a $d$-dimensional latent space of observables. Let this vector of observables be $\mathbf{g}(\mathbf{x}) = \Psi_{\text{enc}}(\mathbf{x})$. These observables span a finite-dimensional subspace, $\mathcal{G}_d = \text{span}\{g_1, \dots, g_d\}$, within the full space of all possible observables.

The action of the true generator $\mathcal{L}$ on our vector of observables $\mathbf{g}$ can be decomposed into a component that lies within our chosen subspace $\mathcal{G}_d$ and a residual component $\mathbf{r}(\mathbf{x})$ that is orthogonal to it $\mathcal{L}\mathbf{g}(\mathbf{x}) = \mathbf{A}^* \mathbf{g}(\mathbf{x}) + \mathbf{r}(\mathbf{x})$.
Here, $\mathbf{A}^*$ is the optimal finite-dimensional generator that represents the action of $\mathcal{L}$ projected onto the subspace $\mathcal{G}_d$. This is, in essence, a Galerkin projection of the infinite-dimensional operator $\mathcal{L}$ onto the subspace spanned by the learned basis functions $\{\mathbf{g}_i\}$. The term $\mathbf{r}(\mathbf{x})$ represents the intrinsic projection error (or truncation error), which is the part of the dynamics that cannot be captured by any linear model within the chosen subspace $\mathcal{G}_d$.

Let $\mathbf{g}_k = \Psi_{enc}(\mathbf{x}(t_k))$ be the true latent state at step $k$. The true evolution satisfies $\mathbf{g}_{k+1} = e^{\Delta t \mathbf{A}^*} \mathbf{g}_k + \Delta t \cdot \mathbf{r}_k$. Our learned model performs the update:
\begin{equation}
	\hat{\mathbf{g}}_{k+1} = e^{\Delta t \mathbf{A}} \hat{\mathbf{g}}_k.
\end{equation}

\begin{theorem}[Global Error Bound]
	\label{thm:global_error}
	Let $\epsilon_{reg} = \|\mathbf{A} - \mathbf{A}^*\|$ be the regression error and $\epsilon_{proj} = \sup_k \|\mathbf{r}_k\|$ be the projection error bound. Assuming the generator $\mathbf{A}$ is constructed as in Section \ref{subsec:general_stable_structure} (stable), the global prediction error at time step $n$ satisfies:
	\begin{equation}
		\|\mathbf{g}_n - \hat{\mathbf{g}}_n\| \le n \Delta t \left( \|\mathbf{g}_0\| \epsilon_{reg} + \epsilon_{proj} \right) + \mathcal{O}((\Delta t)^2).
	\end{equation}
\end{theorem}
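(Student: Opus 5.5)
We plan to track the error $\mathbf{e}_k = \mathbf{g}_k - \hat{\mathbf{g}}_k$ through a one-step recursion, assuming $\hat{\mathbf{g}}_0 = \mathbf{g}_0$. Subtracting the learned update $\hat{\mathbf{g}}_{k+1} = e^{\Delta t\mathbf{A}}\hat{\mathbf{g}}_k$ from the true evolution $\mathbf{g}_{k+1} = e^{\Delta t\mathbf{A}^*}\mathbf{g}_k + \Delta t\,\mathbf{r}_k$, and adding and subtracting $e^{\Delta t\mathbf{A}}\mathbf{g}_k$, gives
\begin{equation*}
\mathbf{e}_{k+1} = e^{\Delta t\mathbf{A}}\mathbf{e}_k + \bigl(e^{\Delta t\mathbf{A}^*} - e^{\Delta t\mathbf{A}}\bigr)\mathbf{g}_k + \Delta t\,\mathbf{r}_k .
\end{equation*}
The propagation operator $e^{\Delta t\mathbf{A}}$ multiplies the accumulated error. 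Proposition~\ref{prop:general_stability} shows it has norm at most $1$, since the block-diagonal form \eqref{eq:block_diagonal_A} has symmetric part $-\mathrm{diag}(\gamma_i,\gamma_i)\preceq 0$. Hence
\[
\|\mathbf{e}_{k+1}\| \le \|\mathbf{e}_k\| + \|e^{\Delta t\mathbf{A}^*} - e^{\Delta t\mathbf{A}}\|\,\|\mathbf{g}_k\| + \Delta t\,\epsilon_{proj}.
\]

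The local consistency term comes from the series $e^{\Delta t\mathbf{M}} = \mathbf{I} + \Delta t\mathbf{M} + \mathcal{O}(\Delta t^2)$. This gives $e^{\Delta t\mathbf{A}^*} - e^{\Delta t\mathbf{A}} = \Delta t(\mathbf{A}^*-\mathbf{A}) + \mathcal{O}(\Delta t^2)$, so its norm is at most $\Delta t\,\epsilon_{reg} + \mathcal{O}(\Delta t^2)$. For a cleaner remainder we could use the Duhamel identity
\[
e^{\Delta t\mathbf{A}^*} - e^{\Delta t\mathbf{A}} = \int_0^{\Delta t} e^{(\Delta t-s)\mathbf{A}}(\mathbf{A}^*-\mathbf{A})e^{s\mathbf{A}^*}\,ds .
\]
The first factor is bounded by $1$ by the same Proposition. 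The last factor is $1+\mathcal{O}(s)$, so the integral is at most $\Delta t\,\epsilon_{reg}\,(1+\mathcal{O}(\Delta t))$. If $\mathbf{A}^*$ is also dissipative, the bound is exactly $\Delta t\,\epsilon_{reg}$ with no remainder.

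Next we need $\|\mathbf{g}_k\|\le \|\mathbf{g}_0\|$, or at least $\|\mathbf{g}_0\| + \mathcal{O}(\Delta t)$. This is the step I expect to be the main obstacle, because the paper gives no stability information on $\mathbf{A}^*$, and the stated bound uses $\|\mathbf{g}_0\|$ rather than $\sup_k\|\mathbf{g}_k\|$. I would first argue that the true dynamics are dissipative, i.e.\ the viscous flow does not create energy, so $\mathbf{A}^*$ inherits $\mathbf{A}^*+\mathbf{A}^{*\top}\preceq 0$ and the true latent trajectory is non-expansive up to the $\Delta t\,\mathbf{r}_k$ forcing. If that is not acceptable, I would instead state the result with $\sup_k\|\mathbf{g}_k\|$ in place of $\|\mathbf{g}_0\|$, noting that the two coincide under the dissipativity assumption.

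Finally, we sum the recursion from $k=0$ to $n-1$ with $\mathbf{e}_0=0$:
\[
\|\mathbf{e}_n\| \le \sum_{k=0}^{n-1}\Bigl(\Delta t\,\|\mathbf{g}_0\|\epsilon_{reg} + \Delta t\,\epsilon_{proj} + \mathcal{O}(\Delta t^2)\Bigr) = n\Delta t\bigl(\|\mathbf{g}_0\|\epsilon_{reg}+\epsilon_{proj}\bigr) + \mathcal{O}(\Delta t^2).
\]
The accumulated remainder is really $n\,\mathcal{O}(\Delta t^2)$, i.e.\ $\mathcal{O}(\Delta t)$ for fixed horizon $T=n\Delta t$. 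The $\mathcal{O}((\Delta t)^2)$ in the statement should be read as collecting these higher-order local terms, and the proof should say so explicitly. The key structural point is that the stability of $e^{\Delta t\mathbf{A}}$ from Proposition~\ref{prop:general_stability} removes the Gronwall factor $e^{Ln\Delta t}$. As a result the error grows linearly rather than exponentially in $n$.
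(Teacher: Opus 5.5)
Your proposal is correct and takes essentially the same route as the paper: the same error recursion from adding and subtracting $e^{\Delta t\mathbf{A}}\mathbf{g}_k$, the same first-order expansion $e^{\Delta t\mathbf{A}^*}-e^{\Delta t\mathbf{A}}=\Delta t(\mathbf{A}^*-\mathbf{A})+\mathcal{O}(\Delta t^2)$, contractivity of $e^{\Delta t\mathbf{A}}$ from Proposition~\ref{prop:general_stability}, $\|\mathbf{g}_k\|\le\|\mathbf{g}_0\|$ taken as a dissipativity assumption, and summation with $\mathbf{e}_0=0$. Your additions (the Duhamel identity, which removes the remainder entirely when $\mathbf{A}^*$ is also dissipative; the observation that the summed remainder is really $n\,\mathcal{O}(\Delta t^2)=\mathcal{O}(\Delta t)$ at fixed horizon; and the $\sup_k\|\mathbf{g}_k\|$ fallback) are sound refinements the paper omits, with one caveat: the paper simply postulates $\|\mathbf{g}_k\|\le\|\mathbf{g}_0\|$, whereas dissipativity of $\mathbf{A}^*$ alone gives only $\|\mathbf{g}_k\|\le\|\mathbf{g}_0\|+k\Delta t\,\epsilon_{proj}$ because of the $\Delta t\,\mathbf{r}_k$ forcing, not $\|\mathbf{g}_0\|+\mathcal{O}(\Delta t)$.
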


\begin{proof}
	Let $\mathbf{e}_k = \mathbf{g}_k - \hat{\mathbf{g}}_k$ be the error at step $k$. The error evolution is:
	\begin{align}
		\mathbf{e}_{k+1} &= \mathbf{g}_{k+1} - \hat{\mathbf{g}}_{k+1} = (e^{\Delta t \mathbf{A}^*} \mathbf{g}_k + \Delta t \mathbf{r}_k) - e^{\Delta t \mathbf{A}} \hat{\mathbf{g}}_k \nonumber \\
		&= e^{\Delta t \mathbf{A}} (\mathbf{g}_k - \hat{\mathbf{g}}_k) + (e^{\Delta t \mathbf{A}^*} - e^{\Delta t \mathbf{A}}) \mathbf{g}_k + \Delta t \mathbf{r}_k \nonumber \\
		&= e^{\Delta t \mathbf{A}} \mathbf{e}_k + \Delta t (\mathbf{A}^* - \mathbf{A}) \mathbf{g}_k + \Delta t \mathbf{r}_k + \mathcal{O}((\Delta t)^2).
	\end{align}
	Taking norms and using the triangle inequality:
	\begin{equation}
		\|\mathbf{e}_{k+1}\| \le \|e^{\Delta t \mathbf{A}}\| \|\mathbf{e}_k\| + \Delta t \|\mathbf{A}^* - \mathbf{A}\| \|\mathbf{g}_k\| + \Delta t \|\mathbf{r}_k\|.
	\end{equation}
	Crucially, due to the stability constraint (Prop. \ref{prop:general_stability}), the operator is contractive: $\|e^{\Delta t \mathbf{A}}\| \le 1$. Furthermore, the state norm is bounded $\|\mathbf{g}_k\| \le \|\mathbf{g}_0\|$ (assuming a dissipative system). The recurrence relation simplifies to:
	\begin{equation}
		\|\mathbf{e}_{k+1}\| \le \|\mathbf{e}_k\| + \Delta t (\epsilon_{reg} \|\mathbf{g}_0\| + \epsilon_{proj}).
	\end{equation}
	Summing this arithmetic progression from $k=0$ to $n-1$ with $\mathbf{e}_0 = 0$ yields the result.
\end{proof}

\begin{remark}
	\label{remark-2}
	Theorem \ref{thm:global_error} theoretically grounds the stability by design of the ViT-K framework. Unlike unconstrained data-driven models where $\|e^{\Delta t \mathbf{A}}\| > 1$ inevitably induces exponential divergence $\mathcal{O}((1+\delta)^n)$, our stability constraint ensures $\|e^{\Delta t \mathbf{A}}\| \le 1$, strictly bounding error accumulation to grow at most linearly with time $T = n\Delta t$. Furthermore, the training objective rigorously targets the remaining error bounds: the projection error $\epsilon_{proj}$ is minimized by exploiting the universal approximation capability of Transformers~\cite{yun2019transformers} to learn a homeomorphism onto a maximally linear subspace, while the regression error $\epsilon_{reg}$ is minimized via the linearity loss (Eq. \ref{eq:linearity_loss-1}) through convex optimization of $\mathbf{A}$ in the latent space.
\end{remark}

\subsubsection{Minimization of Projection Error ($\epsilon_{proj}$)}
The projection error $\epsilon_{proj}(\mathbf{g}) = \|\mathcal{L}\mathbf{g} - \mathbf{A}^*\mathbf{g}\|$ quantifies the deviation of the learned observables $\mathbf{g} = \Psi_{\text{enc}}(\mathbf{x})$ from a true Koopman-invariant subspace, vanishing strictly when perfect closure is achieved.

\begin{lemma}[Approximation via ViT]
\label{lemma:approximation-vit}
	Let $\mathcal{F}_{ViT}^N$ denote the hypothesis space of Vision Transformers with $N$ parameters. Since the flow map $\Phi_t$ and the observables are continuous on a compact domain $\Omega$, the Universal Approximation Theorem for sequence-to-sequence architectures \cite{yun2019transformers} implies that for any $\delta > 0$, there exists a sufficiently large configuration $N$ and a set of parameters $\theta^*$ such that the encoder $\Psi_{\text{enc}}$ can approximate the leading Koopman eigenfunctions $\boldsymbol{\varphi} = [\varphi_1, \dots, \varphi_d]^\top$ with precision $\delta$:
	\begin{equation}
		\inf_{\Psi \in \mathcal{F}_{ViT}^N} \mathbb{E}_{\mathbf{x}} \|\Psi(\mathbf{x}) - \mathbf{T}\boldsymbol{\varphi}(\mathbf{x})\|^2 \le \delta,
	\end{equation}
	where $\mathbf{T}$ is an invertible linear transformation.
\end{lemma}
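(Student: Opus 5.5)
The plan is to reduce the statement to a uniform approximation result for continuous maps on a compact set, and then pass from uniform to mean-square error. Two standing assumptions are needed. The state space of admissible discretized snapshots, $\mathcal{X}_h \subset \mathbb{R}^{H\times W\times C}$, is compact, being the image of the compact parameter and time ranges under the continuous flow map $\Phi_t$. The leading Koopman eigenfunctions $\varphi_1,\dots,\varphi_d$ are continuous on $\mathcal{X}_h$, which is the hypothesis stated in the lemma. Any complex conjugate pairs are replaced by their real and imaginary parts, which is a real invertible change of basis and can be absorbed into $\mathbf{T}$. Because $\mathbf{T}$ is fixed and invertible, $\mathbf{T}\boldsymbol{\varphi}$ is again a continuous map $\mathcal{X}_h \to \mathbb{R}^d$, so it suffices to approximate the single target $f := \mathbf{T}\boldsymbol{\varphi}$. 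We may take $\mathbf{T}=\mathbf{I}$, or any normalization convenient for training.

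The first step is to cast $f$ as a sequence-to-sequence function, so that the result of \cite{yun2019transformers} applies. Patch embedding sends a snapshot $\mathbf{X}$ to the token matrix $\mathbf{X}_p\in\mathbb{R}^{N\times P^2C}$. This map is a linear bijection, so compactness and continuity carry over. Prepend a fixed \texttt{[class]} token and define the target $F(\mathbf{Z}_0)\in\mathbb{R}^{(N+1)\times D}$. Its first row is $(f(\mathbf{X}),0,\dots,0)$ with $D\ge d$, and the remaining rows are arbitrary, say zero. Because the learnable positional embeddings $\mathbf{E}_{pos}$ are present, permutation equivariance is not required, and $F$ is a continuous function on a compact set of token sequences. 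The theorem of \cite{yun2019transformers} then supplies a Transformer $g_\theta$ with $\sup_{\mathbf{Z}}\|g_\theta(\mathbf{Z})-F(\mathbf{Z})\|$ arbitrarily small. Reading off row $0$ gives an encoder $\Psi=\Psi_{\text{enc}}$ with $\sup_{\mathbf{x}}\|\Psi(\mathbf{x})-\mathbf{T}\boldsymbol{\varphi}(\mathbf{x})\|\le\sqrt{\delta}$. Squaring and taking the expectation over any probability measure on $\mathcal{X}_h$ gives the claimed bound $\le\delta$, and the infimum over $\mathcal{F}^N_{ViT}$ is bounded by this particular element.

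The main obstacle is a mismatch between the two architectures. The cited theorem is stated in the $L^p$ norm, and for Transformers without layer normalization, built from specific constructions (contextual mappings with hardmax attention and piecewise-constant FFNs). The ViT in \cref{subsec:vit_structure} uses LN and softmax. I would handle this in three ways. First, I would use the $L^p$ version directly ($p=2$), which already matches the expectation form of the lemma, provided the data measure is absolutely continuous on a compact set. Otherwise I would argue uniform approximation on the compact support via the continuous-target density argument. Second, I would note that softmax approximates hardmax as the temperature grows, which is permitted because the attention weights can be scaled. Third, I would observe that LN can be effectively neutralized by composing with an affine rescaling inside the residual branch. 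If the LN issue resists this treatment, I would state the lemma for the LN-free Transformer class and remark on the modification. Finally, I would remark that $N$ depends on the moduli of continuity of $\boldsymbol{\varphi}$, so no rate is claimed, only existence.
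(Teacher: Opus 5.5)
Your route is the paper's route made explicit. The paper justifies the lemma only by appealing to \cite{yun2019transformers} for a continuous map on a compact set; the ``Derivation'' after it concerns the linearity loss, not this inequality. Your scaffolding is the right setup: a compact snapshot set $\mathcal{X}_h$, conjugate pairs absorbed into $\mathbf{T}$, and the target read off the class-token row. The gap is at the step that actually delivers the bound on $\mathbb{E}_{\mathbf{x}}$. As you note in your last paragraph, the results of \cite{yun2019transformers} control $d_p(f,g)=\bigl(\int_{\mathbb{D}}\|f(X)-g(X)\|_p^p\,dX\bigr)^{1/p}$ with respect to \emph{Lebesgue measure} on a compact domain $\mathbb{D}$ of token space. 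So the sup-norm bound asserted in your second paragraph is not what the theorem provides, and neither of your repairs closes this.
\begin{itemize}
\item Using $p=2$ ``provided the data measure is absolutely continuous'' fails in exactly the setting you built. $\mathcal{X}_h$, the image of a low-dimensional parameter--time set under Lipschitz dependence on $t$ and parameters, is a Lebesgue-null subset of $\mathbb{R}^{H\times W\times C}$. In the paper's few-shot regime, $\mathbb{E}_{\mathbf{x}}$ is even an average over a handful of snapshots. The data measure is therefore singular, and once $F$ is extended continuously to a box, the $L^2(dX)$ guarantee is blind to its values on $\mathcal{X}_h$. Even for absolutely continuous data you would need a bounded density, not mere absolute continuity.
\item The fallback ``uniform approximation via the continuous-target density argument'' is not supplied by Yun et al., and you give no argument for it. $L^p$-closeness of continuous functions yields sup-closeness only under equicontinuity. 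Yun et al.'s approximants are built from steep ReLU surrogates of discontinuous quantizers and near-hardmax attention. Their Lipschitz constants blow up, and their error is controlled only off thin strips around grid-cell boundaries and attention ties, which $\mathcal{X}_h$ generically crosses.
\end{itemize}

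What is needed is a genuinely uniform approximation on $\mathcal{X}_h$. For this architecture it follows from the classical uniform approximation theorem for MLPs alone.
\begin{enumerate}
\item Take $D\ge\max(d,P^2C)$ with $\mathbf{E}$ injective, and distinct positional embeddings. With only $D\ge d$ (e.g.\ the paper's $D=192<16^2C$), your $F$ is not even a function of $\mathbf{Z}_0$.
\item In one block, zero the attention output and let the tokenwise FFN write each patch's content into a position-specific group of coordinates. This map is continuous in content and position, since there are finitely many isolated positions.
\item In the next block, a head with $W_Q=W_K=0$ has uniform weights $1/(N+1)$. The class token therefore receives all patch contents in disjoint slots, and its FFN approximates $\mathbf{T}\boldsymbol{\varphi}$ of the reassembled snapshot uniformly.
\end{enumerate}
Uniform continuity on a compact neighbourhood controls the propagated error. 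This gives $\sup_{\mathbf{x}\in\mathcal{X}_h}\|\Psi(\mathbf{x})-\mathbf{T}\boldsymbol{\varphi}(\mathbf{x})\|\le\sqrt{\delta}$, and hence the bound for \emph{every} probability measure on $\mathcal{X}_h$.

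Your LN fix must also be replaced. LN is invariant under positive rescaling of its input and removes a data-dependent mean and standard deviation, so no fixed affine map undoes it. Instead, choose embeddings on which LN is injective with continuous inverse. For example, carry content $\mathbf{v}$ as $(\mathbf{v},-\mathbf{v},c,-c)$: this has zero mean, and $\mathbf{v}$ is recovered from its normalization as $c$ times the first block divided by the $c$-coordinate.

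Softmax needs no separate treatment: Yun et al.\ state their result for softmax Transformers, and the construction above uses only uniform attention. Finally, $(\varphi,\bar\varphi)\mapsto(\mathrm{Re}\,\varphi,\mathrm{Im}\,\varphi)$ is a complex, not real, change of basis. So $\mathbf{T}$ must be allowed to be complex, and the leading set must be closed under conjugation.
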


\noindent\textbf{Derivation:}
The training loss $\mathcal{L}_{total}$ includes the reconstruction term and the linearity term. The linearity term acts as a regularizer that penalizes non-closure.
\begin{equation}
	\min_{\Psi_{\text{enc}}} \sum_{k} \|\Psi_{\text{enc}}(\mathbf{x}_{k+1}) - e^{\Delta t \mathbf{A}}\Psi_{\text{enc}}(\mathbf{x}_k)\|^2.
\end{equation}
By optimizing $\Psi_{\text{enc}}$, the network seeks a manifold where the residual $\mathbf{r}_k$ is minimized. As the network capacity $N \to \infty$ and training converges, the encoder discovers a coordinate system $\mathbf{g}$ that maximally aligns with the span of the true eigenfunctions, thereby ensuring $\lim_{N \to \infty} \epsilon_{proj} = 0$.
This justifies the statement that ViT leverages its representation power to linearize the dynamics.

\subsubsection{Minimization of Regression Error ($\epsilon_{reg}$)}
The regression error $\epsilon_{reg} = \|\mathbf{A}^* - \mathbf{A}\|$ represents the deviation of our learned matrix $\mathbf{A}$ from the optimal projection $\mathbf{A}^*$ defined on the learned subspace.

\begin{lemma}[Estimation Consistency]
	For a fixed encoder $\Psi_{\text{enc}}$ (and thus a fixed feature space), the optimization of the generator $\mathbf{A}$ is a convex Least Squares problem (or Ridge Regression if weight decay is applied).
\end{lemma}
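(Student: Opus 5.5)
The plan is to write out the objective that determines $\mathbf{A}$ once $\Psi_{\text{enc}}$ is frozen, and check that it is a convex quadratic in the unknowns. Fix the encoder and collect the latent snapshots $\mathbf{g}_k = \Psi_{\text{enc}}(\mathbf{x}_k)$ into data matrices $\mathbf{G} = [\mathbf{g}_1,\dots,\mathbf{g}_M]$ and $\mathbf{G}' = [\mathbf{g}_2,\dots,\mathbf{g}_{M+1}]$. With the encoder fixed, these are constant data.

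The linearity loss \eqref{eq:linearity_loss-1} is quadratic in the propagator $\mathbf{K} = e^{\Delta t \mathbf{A}}$, not in $\mathbf{A}$ itself. I will therefore first treat the problem in the propagator variable, assuming a uniform step $\Delta t$. There the loss reads
\[
\tfrac{1}{M}\|\mathbf{G}' - \mathbf{K}\mathbf{G}\|_F^2 .
\]
This is a linear least-squares functional of $\mathbf{K}$, hence convex. Its Hessian is $\tfrac{2}{M}\,\mathbf{G}\mathbf{G}^\top \otimes \mathbf{I}$, which is positive semidefinite, and positive definite when $\mathbf{G}$ has full row rank. The normal equations give the minimizer $\mathbf{K} = \mathbf{G}'\mathbf{G}^\top(\mathbf{G}\mathbf{G}^\top)^{-1}$, the DMD/EDMD estimator.

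If weight decay $\beta\|\mathbf{K}\|_F^2$ is added, the functional becomes strictly convex. The minimizer is then the ridge solution $\mathbf{G}'\mathbf{G}^\top(\mathbf{G}\mathbf{G}^\top + \beta M \mathbf{I})^{-1}$, which exists and is unique. Consistency follows by taking $M\to\infty$: the empirical moments $\tfrac1M\mathbf{G}\mathbf{G}^\top$ and $\tfrac1M\mathbf{G}'\mathbf{G}^\top$ converge, by the law of large numbers or an ergodicity assumption, to their population counterparts. The estimator therefore converges to the Galerkin projection $e^{\Delta t\mathbf{A}^*}$. Recovering $\mathbf{A}$ from this limit through the principal matrix logarithm is continuous near the limit, so the regression error $\epsilon_{reg}$ tends to zero.

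The main obstacle is the gap between $\mathbf{K}$ and the structured parameters $\gamma_i,\omega_i$ of \eqref{eq:block_diagonal_A}. The map $\mathbf{A}\mapsto e^{\Delta t\mathbf{A}}$ is not affine, so the loss is not convex in $(\gamma_i,\omega_i)$ jointly; for example, $\omega_i$ enters through $\cos$ and $\sin$. I would address this in two steps.

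First, I would exploit the block structure. Each block's propagator is $e^{-\gamma_i\Delta t}R(\omega_i\Delta t)$, where $R$ denotes rotation. Writing it as $\begin{bmatrix} a_i & -b_i\\ b_i & a_i\end{bmatrix}$ gives a linear parameterization in $(a_i,b_i)$. The stability constraint $\gamma_i\ge0$ becomes $a_i^2+b_i^2\le1$, a convex disk. The problem is therefore a convex constrained least squares in $(a_i,b_i)$.

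Second, I would state the lemma precisely in the $(a_i,b_i)$ variables, or in $\mathbf{K}$ for the dense case. Alternatively, I would note that under a first-order expansion $e^{\Delta t\mathbf{A}}\approx \mathbf{I}+\Delta t\mathbf{A}$, consistent with the $\mathcal{O}(\Delta t^2)$ convention in Theorem \ref{thm:global_error}, the loss is exactly quadratic in $\mathbf{A}$.
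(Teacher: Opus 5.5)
Your argument is sound, but it takes a different and more careful route than the paper. The paper never leaves the variable $\mathbf{A}$. It writes the empirical risk over the structured set $\mathcal{S}$, then invokes the small-$\Delta t$ limit $e^{\Delta t\mathbf{A}}\approx\mathbf{I}+\Delta t\mathbf{A}$ to turn it into linear least squares with $\mathbf{A}_{opt}\approx\frac{1}{\Delta t}(\mathbf{G}'\mathbf{G}^\dagger-\mathbf{I})$. Finally it cites Korda--Mezi\'c for convergence in probability. That is exactly the fallback in your last sentence, and its formula is just your EDMD estimator followed by the first-order logarithm $\log\mathbf{K}\approx\mathbf{K}-\mathbf{I}$.

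Your change of variables to $\mathbf{K}$, or blockwise to $(a_i,b_i)$, buys exact convexity. It also exposes what the paper glosses over: the loss is not convex in $(\gamma_i,\omega_i)$, since it is $2\pi/\Delta t$-periodic in $\omega_i$.

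The paper's linearization buys something too, because the decision variable stays $\mathbf{A}$:
\begin{itemize}
\item The stability set $\{\mathbf{A}+\mathbf{A}^\top\preceq 0\}$ (or $\gamma_i\ge 0$) stays convex.
\item Weight decay on $\mathbf{A}$ really is a ridge penalty. Your $\beta\|\mathbf{K}\|_F^2=2\beta\sum_i e^{-2\gamma_i\Delta t}$ is a different regularizer.
\item For a dense stable $\mathbf{A}$, the image $\{e^{\Delta t\mathbf{A}}:\mathbf{A}+\mathbf{A}^\top\preceq 0\}$ is not convex. You would have to relax to $\|\mathbf{K}\|_2\le 1$, whose elements need not have a real logarithm.
\end{itemize}

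Two details need tightening.
\begin{itemize}
\item The image of $\{\gamma_i\ge 0\}$ is the punctured disk $0<a_i^2+b_i^2\le 1$, and under Softplus the unit circle is also excluded. So convexity holds only for its closure, and a minimizer at the origin or on the unit circle is not attained by admissible $(\gamma_i,\omega_i)$.
\item The $M\to\infty$ limit of your $\mathbf{K}$-regression is the $L^2(\mu)$-projection of $\mathcal{K}_{\Delta t}$ onto $\mathcal{G}_d$. This coincides with $e^{\Delta t\mathbf{A}^*}$ only up to $\mathcal{O}((\Delta t)^2)$ unless $\mathcal{G}_d$ is invariant. Hence $\epsilon_{reg}$ tends to an $\mathcal{O}(\Delta t)$ remainder rather than to zero, which is the same approximation the paper makes implicitly.
\end{itemize}
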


\noindent\textbf{Derivation:}
Consider the linearity loss $\mathcal{L}_{linearity}$ over a dataset of $M$ snapshots. This is equivalent to minimizing the empirical risk $
	\hat{\mathbf{A}} = \arg\min_{\mathbf{A} \in \mathcal{S}} \frac{1}{M} \sum_{k=1}^M \|\mathbf{g}_{k+1} - e^{\Delta t \mathbf{A}}\mathbf{g}_k\|^2
$,
where $\mathcal{S}$ is the set of matrices satisfying the stable structure constraints (Section \ref{subsec:general_stable_structure}).
Let $\mathbf{G} = [\mathbf{g}_1, \dots, \mathbf{g}_{M-1}]$ and $\mathbf{G}' = [\mathbf{g}_2, \dots, \mathbf{g}_M]$ be the data matrices. In the small $\Delta t$ limit, finding $\mathbf{A}$ approximates solving the normal equations $
	\mathbf{A}_{opt} \approx \frac{1}{\Delta t} (\mathbf{G}'\mathbf{G}^\dagger - \mathbf{I})$.
	
Standard statistical learning theory guarantees \cite{korda2018linear} that as the number of samples $M \to \infty$, the empirical estimator $\hat{\mathbf{A}}$ converges to the population optimal $\mathbf{A}^*$ in probability:
\begin{equation}
	\lim_{M \to \infty} P(\|\hat{\mathbf{A}} - \mathbf{A}^*\| > \eta) = 0, \quad \forall \eta > 0.
\end{equation}
Thus, $\epsilon_{reg}$ is minimized by the abundant training data and the explicit formulation of the linearity loss.

\subsubsection{Total Convergence}
\label{subsec:convergence_derivation}

In this section, we provide a rigorous derivation of the convergence rates for the ViT-K framework. Recall from Theorem \ref{thm:global_error} that the global prediction error is bounded by two distinct terms: \textbf{the projection error $\epsilon_{proj}$} (related to the model architecture) and \textbf{the regression error $\epsilon_{reg}$} (related to data sampling). We analyze the asymptotic behavior of these terms with respect to the network width/capacity $N$ and the dataset size $M$.

Let $\mathcal{L}$ be the true Koopman generator and $\hat{\mathbf{A}}$ be the empirically learned generator via the ViT-K framework. The total error can be decomposed into bias (approximation) and variance (estimation) components:
\begin{equation}
	\text{Total Error} \le \underbrace{\epsilon_{proj}(N)}_{\text{Approximation Error}} + \underbrace{\epsilon_{reg}(M)}_{\text{Estimation Error}}.
\end{equation}

Next, we will analyze the above two errors $\epsilon_{proj}$ and $\epsilon_{reg}$ as follows.

\textbf{1. Projection Error Bound: $\mathcal{O}(N^{-1})$}

The projection error $\epsilon_{proj}$ stems from the inability of a finite-sized neural network to perfectly represent the true Koopman invariant subspace.

\textbf{Assumption 1 (Spectral Regularity):} Assume the true continuous-time dynamics possess a discrete spectrum, and the leading $d$ Koopman eigenfunctions $\boldsymbol{\varphi} = [\varphi_1, \dots, \varphi_d]^\top$ belong to a spectral space $\mathcal{C}^s(\Omega)$ (smooth functions) with finite Barron norm $\|\boldsymbol{\varphi}\|_{\mathcal{B}} < \infty$.

From Lemma \ref{lemma:approximation-vit} of approximation by Vision Transformer, let $\mathcal{F}_N$ denote the function space parameterized by a Vision Transformer with width (hidden dimension) $N$. According to the universal approximation theorem for Transformers \cite{yun2019transformers} and spectral approximation bounds for neural networks \cite{barron2002universal, siegel2023optimal}, the rate at which a neural network of size $N$ approximates a smooth function $\boldsymbol{\varphi}$ in the $L^2$ norm is governed by:
\begin{equation}
	\inf_{\Psi \in \mathcal{F}_N} \|\Psi - \boldsymbol{\varphi}\|_{L^2(\Omega)} \le \frac{C_{\boldsymbol{\varphi}}}{N},
\end{equation}
where $C_{\boldsymbol{\varphi}}$ depends on the smoothness (Barron norm) of the eigenfunctions but is independent of $N$.

Since the projection residual $\mathbf{r}_k$ in $\mathcal{L}\mathbf{g}(\mathbf{x}) = \mathbf{A}^* \mathbf{g}(\mathbf{x}) + \mathbf{r}(\mathbf{x})$ vanishes if the observable $\mathbf{g}$ lies exactly in the span of eigenfunctions, the projection error is directly proportional to the approximation error of the eigenfunctions:

\begin{eqnarray}
	\epsilon_{proj}(N) \triangleq \mathbb{E}_{\mathbf{x}} \|\mathbf{r}(\mathbf{x})\| \le C_1 \inf_{\Psi \in \mathcal{F}_N} \|\Psi - \boldsymbol{\varphi}\| \le \mathcal{O}(N^{-1}).
\end{eqnarray}

This establishes that increasing the model capacity $N$ linearly reduces the projection error (bias).

\textbf{2. Regression Error Bound: $\mathcal{O}(M^{-1/2})$}

The regression error $\epsilon_{reg}$ arises from estimating the operator $\mathbf{A}$ using a finite dataset of $M$ snapshot pairs $\{(\mathbf{x}_k, \mathbf{x}_{k+1})\}_{k=1}^M$. This is a statistical estimation problem. For a fixed encoder $\Psi$ (and thus fixed features $\mathbf{g}$), minimizing the linearity loss is equivalent to solving a Least Squares problem for $\mathbf{A}$:
\begin{equation}
	\hat{\mathbf{A}} = \arg\min_{\mathbf{A}} \frac{1}{M} \sum_{k=1}^M \|\mathbf{g}_{k+1} - e^{\Delta t \mathbf{A}}\mathbf{g}_k\|^2.
\end{equation}
Let $\mathbf{A}^*$ be the population minimizer (infinite data limit). The difference $\|\hat{\mathbf{A}} - \mathbf{A}^*\|$ is the estimation error.

Assuming the data samples are i.i.d. drawn from the underlying distribution $\mu$, and the feature vectors are bounded $\|\mathbf{g}\| \le B$. By standard results in statistical learning theory (Rademacher complexity bounds for linear hypothesis classes \cite{bartlett2002rademacher}), the generalization gap between the empirical risk and true risk scales with the square root of the sample size.
Specifically, for matrix regression problems, the bound is given by $
	\mathbb{E}[\|\hat{\mathbf{A}} - \mathbf{A}^*\|_F] \le \frac{C_{noise} \cdot B}{\sqrt{M}}$,
where $C_{noise}$ represents the noise variance or the intrinsic residual of the system, and $\|\cdot\|_F$ is the Frobenius norm.

Thus, we obtain the estimation error bound $\epsilon_{reg}(M) \le \mathcal{O}(M^{-1/2})$.
This establishes that the error in learning the dynamics matrix decreases with the square root of the data quantity.

\textbf{3. Total Error Convergence}

Substituting the derived bounds back into the global error inequality from Theorem \ref{thm:global_error}, we obtain the final convergence rate:

\begin{equation}
	\text{Total Error} \le T \cdot \left( \|\mathbf{g}_0\| \epsilon_{reg}(M) + \epsilon_{proj}(N) \right) 
	\le T \cdot \left( C_2 M^{-1/2} + C_1 N^{-1} \right).
\end{equation}

The total error bound from Theorem \ref{thm:global_error} converges as:
\begin{equation}
	\text{Total Error} \propto \underbrace{\mathcal{O}(N^{-1})}_{\text{ViT Capacity } (\epsilon_{proj})} + \underbrace{\mathcal{O}(M^{-1/2})}_{\text{Data Sampling } (\epsilon_{reg})}.
\end{equation}
This derivation highlights the dual requirement for high-fidelity modeling: (1) A sufficiently large ViT model ($N$) is needed to discover the correct Koopman invariant subspace, and (2) Sufficient training data ($M$) is required to accurately identify the linear dynamics within that subspace.

\section{Numerical experiments}
\label{sec:numerical}

In this section, we present a series of numerical experiments to validate the accuracy and long-term stability of the proposed ViT-K framework. The performance of ViT-K method is quantitatively assessed using three standard metrics. For each quantity $f \in \{u_1, u_2, p, \phi\}$ at a given time $t_n$, we define:
\begin{align*}
	\mathrm{MSE} &= \frac{1}{N}\sum_{i,j}(\hat{f}_{i,j}-f_{i,j})^2,
	& \mathrm{MAE} &= \frac{1}{N}\sum_{i,j}|\hat{f}_{i,j}-f_{i,j}|, \\
	\mathrm{Rel}\,L^2 &= \frac{\|\hat{f}-f\|_2}{\|f\|_2},
	& \mathrm{RMSE}(t) &= \sqrt{ \frac{1}{N} \sum_{i=1}^{N} \left( \hat{u}_i(t) - u_i(t) \right)^2 }.
\end{align*}
where $N$ denotes the total number of spatial collocation points.

\subsection{Example 1: Time-Dependent Stokes--Darcy System}
\label{subsec:ex1_stokes_darcy}

We first benchmark the ViT-K framework on a coupled time-dependent Stokes-Darcy problem defined on the domain $\Omega = [0, \pi] \times [-1, 1]$. The domain is partitioned into the porous medium region $\Omega_D = [0, \pi] \times [0, 1]$ and the free-flow region $\Omega_S = [0, \pi] \times [-1, 0]$. The physical parameters are set to unity: $\nu=1, g=1, k=1$, and the interface height $z=0$.

We construct an analytical solution to validate the model's ability to capture dissipative dynamics. The exact fields for the hydraulic head $\phi_D$, fluid velocity $\vec{u}_S = [u_1, u_2]^T$, and pressure $p_S$ are given by:
\begin{align}
	\phi_D &= (e^y - e^{-y}) \sin(x) e^{-t}, \qquad u_1 = \frac{k}{\pi} \sin(2\pi y) \cos(x) e^{-t}, \nonumber\\
	u_2 &= \left(-2k + \frac{k}{\pi^2} \sin^2(\pi y)\right) \sin(x) e^{-t}, \qquad p = 0.\nonumber
\end{align}
The dataset consists of snapshots generated over the interval $t \in [0, 2.0]$. The initial phase $t \in [0, 1.0]$ serves as the “training horizon”, while the subsequent phase $t \in (1.0, 2.0]$ serves as the “extrapolation horizon” to test long-term stability. So our method is Few-Shot Learning framework with only 10 snapshots for training. The spatial domain is discretized using a uniform $96 \times 96$ grid. All physical fields are normalized by their maximum amplitudes to ensure numerical stability and scale invariance. Specifically, for a physical field \( u(\mathbf{x}, t) \), the normalized field \( \tilde{u}(\mathbf{x}, t) \) is defined as
\[
\tilde{u}(\mathbf{x}, t)
\;=\;
\frac{u(\mathbf{x}, t)}
{\displaystyle \max_{\mathbf{x},\, t \in [0,\,1.0]} \left| u(\mathbf{x}, t) \right|}.
\]
A binary mask indicating the subdomains is appended as an auxiliary input channel.

\textbf{Implementation Details:}
The ViT-K model is instantiated with a Vision Transformer encoder featuring a patch size of $16 \times 16$, an embedding dimension of $192$, and depth of $6$ layers with $6$ attention heads. The temporal evolution in the latent space is governed by the continuous-time Koopman operator $\mathcal{K}(\Delta t) = \exp(\Delta t \mathbf{A})$. 
Training is performed for $1200$ epochs using the AdamW optimizer (learning rate $5 \times 10^{-4}$, weight decay $2 \times 10^{-5}$) and a cosine annealing schedule with a 10-epoch warmup. The loss function weights are empirically set to $w_{u_1}=3.0, w_{u_2}=5.0, w_{p}=0.05$, and $w_{\phi}=1.0$ to balance the gradients across different magnitudes.

\textbf{Prediction Accuracy and Stability:} 
Tables \ref{tab:sd_dt005} and \ref{tab:sd_dt020} summarize the quantitative evaluation of the model trained with a time step $\Delta t \in \{0.05, 0.2\}$. For our few-shot learning framework, only 20 (with $\Delta t=0.05$) or 5 (with $\Delta t=0.2$) snapshots are used in training. Figure~\ref{fig:sd_nsd_loss} shows the convergence history, demonstrating a stable and efficient optimization process. From Table~\ref{tab:sd_dt020}, we find that the ViT-K framework exhibits exceptional reconstruction fidelity in the interpolation regime ($t \le 1.0$). The relative errors for velocity and hydraulic head are consistently below $0.8\%$, with the Mean Squared Error (MSE) remaining on the order of $\mathcal{O}(10^{-6})$. 

In the extrapolation regime ($t > 1.0$), we observe a natural error growth due to the recursive nature of the prediction. However, the error accumulation remains bounded. At $t=2.0$, the maximum relative error is constrained within $10\%\sim 14\%$, and the absolute MSE remains low ($\mathcal{O}(10^{-5})$). This confirms that the learned Koopman operator successfully captures the global stability of the dissipative system without exponential divergence.
	
Note that the relative error for pressure $p$ is undefined (marked as $-$) because the exact solution is trivial ($p_S=0$). The absolute MSE for pressure remains negligible ($\approx 5 \times 10^{-5}$), indicating the model correctly maintains the zero-pressure condition.

Furthermore, Figure~\ref{fig:error-linear-1} illustrates the temporal evolution of the Root Mean Square Error (RMSE) during the extrapolation phase. Notably, the error exhibits an almost linear growth trend over time, as illustrated by the Theorem \ref{thm:global_error} and Remark \ref{remark-2}. 

\begin{table}[htbp] 
	\footnotesize
	\centering 
	\caption{Quantitative assessment of prediction quality ($\Delta t = 0.05$).}
	\label{tab:sd_dt005}
	\setlength{\tabcolsep}{5pt} 
	\begin{tabular}{lccccc}
		\toprule 
		Channel & Time & MSE & MAE & Max Error & Relative Error (\%) \\
		\midrule
		\multirow{2}{*}{$u_1$} 
		& $t=1.0$ & $2.32 \times 10^{-6}$ & $1.13 \times 10^{-3}$ & $6.44 \times 10^{-3}$ & $0.81$ \\
		& $t=2.0$ & $6.33 \times 10^{-5}$ & $5.86 \times 10^{-3}$ & $4.22 \times 10^{-2}$ & $11.75$ \\
		\midrule
		\multirow{2}{*}{$u_2$} 
		& $t=1.0$ & $1.27 \times 10^{-6}$ & $8.81 \times 10^{-4}$ & $4.22 \times 10^{-3}$ & $0.43$ \\
		& $t=2.0$ & $8.86 \times 10^{-5}$ & $6.61 \times 10^{-3}$ & $4.99 \times 10^{-2}$ & $10.14$ \\
		\midrule
		\multirow{2}{*}{$p$} 
		& $t=1.0$ & $5.45 \times 10^{-6}$ & $1.72 \times 10^{-3}$ & $1.14 \times 10^{-2}$ & -- \\
		& $t=2.0$ & $5.66 \times 10^{-5}$ & $5.72 \times 10^{-3}$ & $3.69 \times 10^{-2}$ & -- \\
		\midrule
		\multirow{2}{*}{$\phi$} 
		& $t=1.0$ & $1.54 \times 10^{-6}$ & $9.69 \times 10^{-4}$ & $4.90 \times 10^{-3}$ & $0.85$ \\
		& $t=2.0$ & $6.27 \times 10^{-5}$ & $5.71 \times 10^{-3}$ & $4.07 \times 10^{-2}$ & $15.14$ \\
		\bottomrule
	\end{tabular}
	\vspace{-2em}
\end{table}

\begin{table}[htbp]
	\footnotesize
	\centering
	\caption{Quantitative assessment of prediction quality ($\Delta t = 0.2$).}
	\label{tab:sd_dt020}
	\setlength{\tabcolsep}{5pt}
	\begin{tabular}{lccccc}
		\toprule
		Channel & Time & MSE & MAE & Max Error & Relative Error (\%) \\
		\midrule
		\multirow{2}{*}{$u_1$}
		& $t=1.0$ & $2.29 \times 10^{-6}$ & $1.12 \times 10^{-3}$ & $7.36 \times 10^{-3}$ & $0.74$ \\
		& $t=2.0$ & $4.16 \times 10^{-5}$ & $4.75 \times 10^{-3}$ & $3.22 \times 10^{-2}$ & $9.53$ \\
		\midrule
		\multirow{2}{*}{$u_2$}
		& $t=1.0$ & $1.15 \times 10^{-6}$ & $0.83 \times 10^{-3}$ & $4.15 \times 10^{-3}$ & $0.38$ \\
		& $t=2.0$ & $5.04 \times 10^{-5}$ & $4.93 \times 10^{-3}$ & $3.91 \times 10^{-2}$ & $7.65$ \\
		\midrule
		\multirow{2}{*}{$p$}
		& $t=1.0$ & $4.30 \times 10^{-6}$ & $1.53 \times 10^{-3}$ & $1.01 \times 10^{-2}$ & -- \\
		& $t=2.0$ & $4.28 \times 10^{-5}$ & $4.98 \times 10^{-3}$ & $3.22 \times 10^{-2}$ & -- \\
		\midrule
		\multirow{2}{*}{$\phi$}
		& $t=1.0$ & $1.41 \times 10^{-6}$ & $0.93 \times 10^{-3}$ & $4.91 \times 10^{-3}$ & $0.75$ \\
		& $t=2.0$ & $4.12 \times 10^{-5}$ & $4.67 \times 10^{-3}$ & $3.46 \times 10^{-2}$ & $12.27$ \\
		\bottomrule
	\end{tabular}
	\vspace{-1em}
\end{table}

\begin{figure}[htbp]
	\centering
	\includegraphics[width=0.8\linewidth]{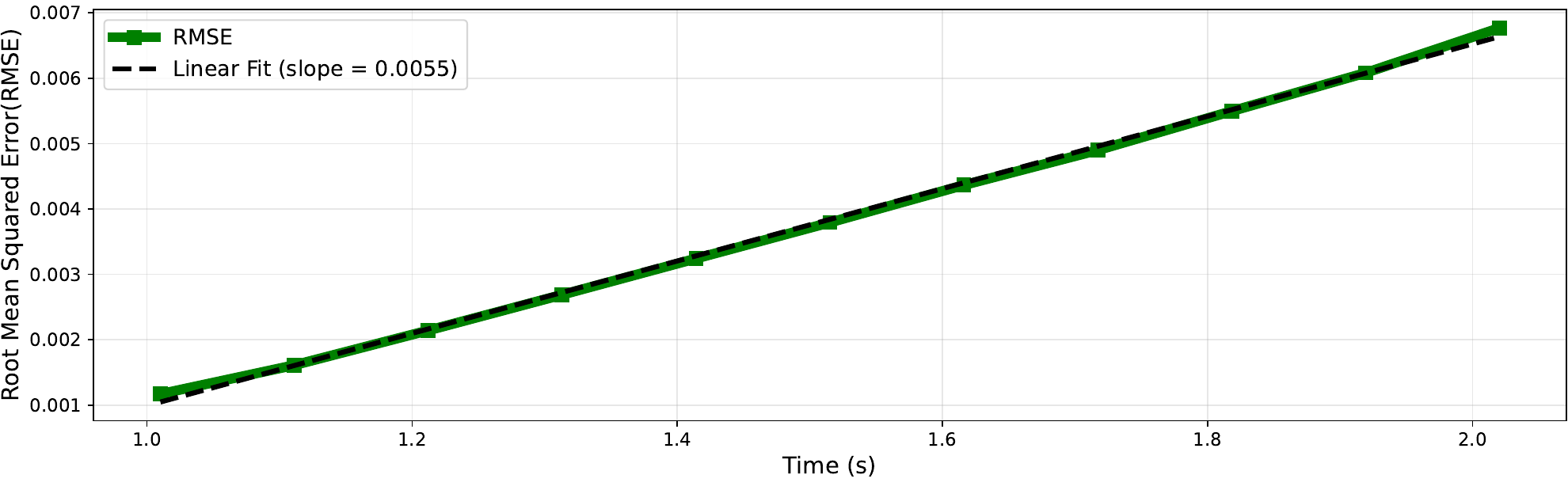}
	\caption{Root Mean Square Error (RMSE) vs. Extrapolation Time}
	\label{fig:error-linear-1}
	\vspace{-2em}
\end{figure}

\textbf{Impact of Temporal Sampling Step $\Delta t$:}
We further investigate the sensitivity of the model to the sampling step $\Delta t$. A comparative summary is provided in Table \ref{tab:dt_two_times}. An interesting phenomenon is observed in the long-term extrapolation ($t=2.0$): the model with the largest time step ($\Delta t = 0.2$) achieves the lowest prediction error. For instance, the relative error for $u_1$ at $t=2.0$ decreases from $11.75\%$ ($\Delta t=0.05$) to $9.53\%$ ($\Delta t=0.2$).
This counter-intuitive result can be attributed to the trade-off between discretization error and error accumulation. While a smaller $\Delta t$ offers finer temporal resolution, it requires more recursive steps ($N_{steps} = T/\Delta t$) to reach the final time $T$. Since the operator prediction $\hat{\mathbf{g}}_{k+1} = \mathbf{K}\mathbf{g}_k$ introduces a small error $\epsilon$ at each step, the total accumulated error after $N$ steps scales with $N$. A larger $\Delta t$ reduces the number of recurrent operations, thereby mitigating the accumulation of floating-point and approximation errors over long horizons.
\begin{table}[htbp]
	\footnotesize
	\centering
	\caption{Effect of temporal sampling step $\Delta t$ on prediction accuracy. Larger $\Delta t$ yields better long-term stability due to reduced error accumulation steps.}
	\label{tab:dt_two_times}
    \setlength{\tabcolsep}{3pt} 
	\begin{tabular}{lcccccc}
		\toprule
		& \multicolumn{6}{c}{Temporal sampling step $\Delta t$} \\
		\cmidrule(lr){2-7}
		Channel
		& \multicolumn{3}{c}{$t = 1.0$ (Interpolation)}
		& \multicolumn{3}{c}{$t = 2.0$ (Extrapolation)} \\
		\cmidrule(lr){2-4} \cmidrule(lr){5-7}
		& $0.05$ & $0.1$ & $0.2$
		& $0.05$ & $0.1$ & $0.2$ \\
		\midrule
		$u_1$ (MSE)
		& $2.32{\times}10^{-6}$ & \textbf{$2.22{\times}10^{-6}$} & $2.29{\times}10^{-6}$
		& $6.33{\times}10^{-5}$ & $5.27{\times}10^{-5}$ & \textbf{$4.16{\times}10^{-5}$} \\
		$u_1$ (Rel.\%)
		& $0.81$ & $0.77$ & \textbf{$0.74$}
		& $11.75$ & $10.72$ & \textbf{$9.53$} \\
		\midrule
		$u_2$ (MSE)
		& $1.27{\times}10^{-6}$ & \textbf{$9.30{\times}10^{-7}$} & $1.15{\times}10^{-6}$
		& $8.86{\times}10^{-5}$ & $7.39{\times}10^{-5}$ & \textbf{$5.04{\times}10^{-5}$} \\
		$u_2$ (Rel.\%)
		& $0.43$ & \textbf{$0.36$} & $0.38$
		& $10.14$ & $9.26$ & \textbf{$7.65$} \\
		\bottomrule
	\end{tabular}\vspace{-2em}
\end{table}
\begin{figure}[t]
	\centering
	\includegraphics[width=0.48\linewidth]{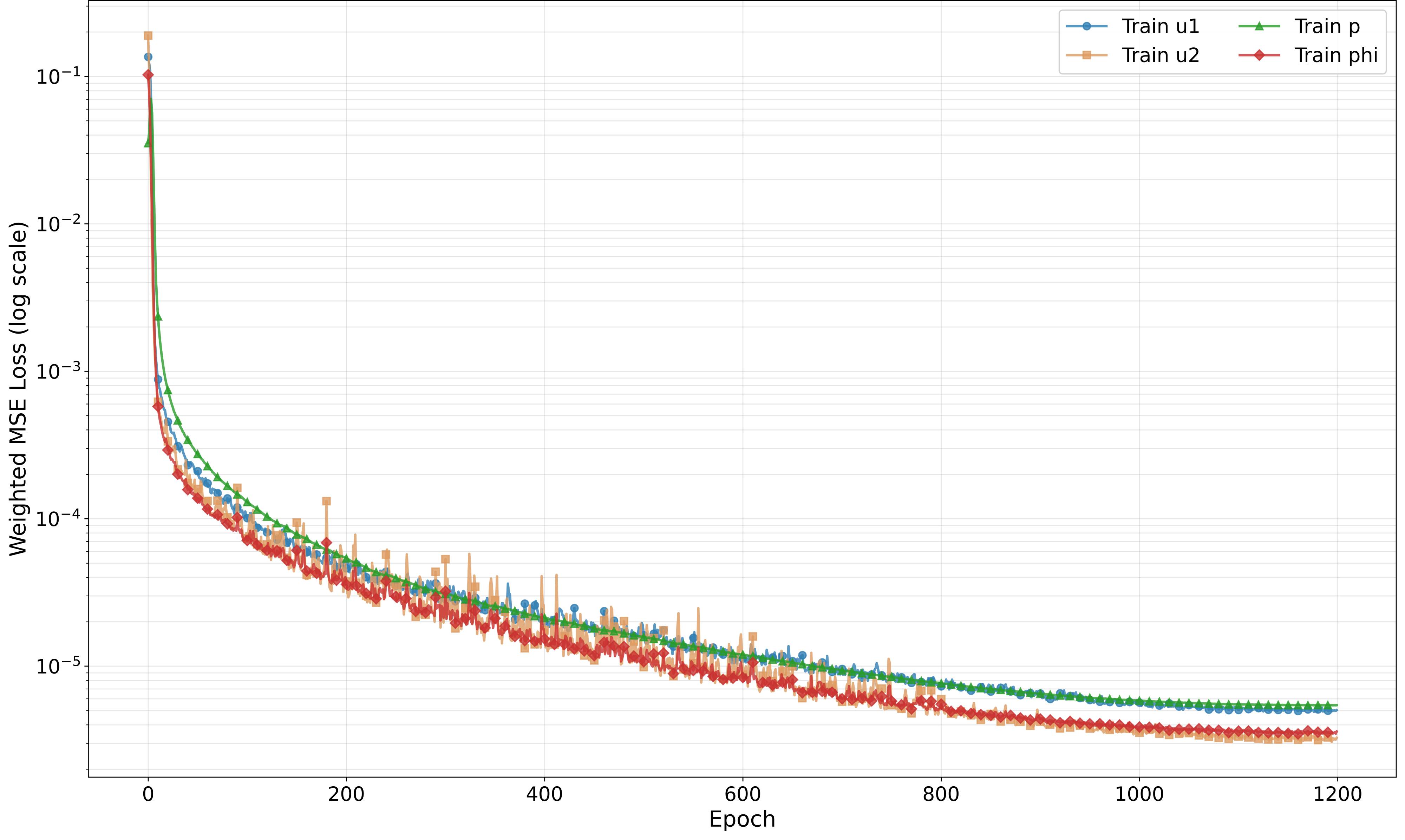}
	\includegraphics[width=0.48\linewidth]{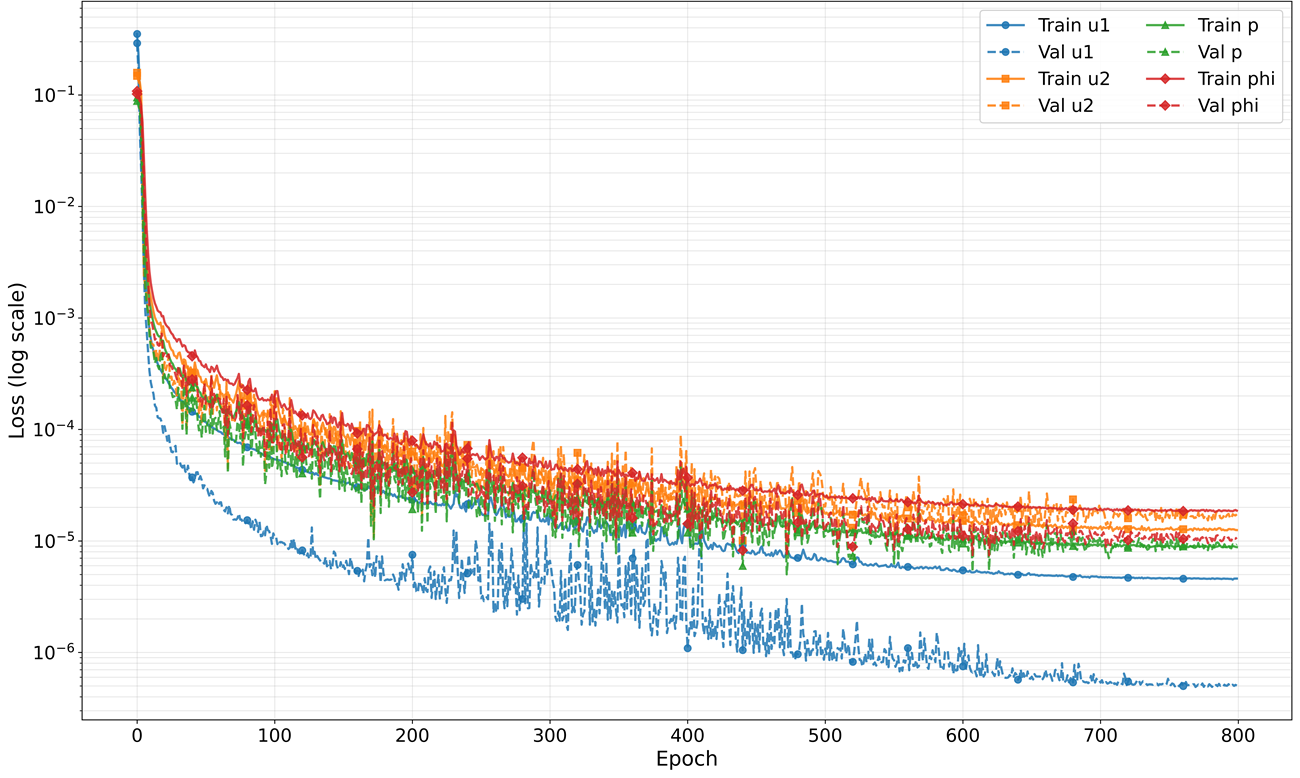}
	\caption{Convergence history of the component-wise training losses, showing stable optimization. Left is for Stokes--Darcy System and right is for Navier--Stokes--Darcy System.}
	\label{fig:sd_nsd_loss}
	\vspace{-3em}
\end{figure}

\subsection{Example 2: Time-Dependent Navier--Stokes--Darcy System}
\label{subsec:ex2_nsd}
We extend our validation to the time-dependent Navier--Stokes--Darcy (NSD) system, which introduces nonlinear convective transport in the free-flow region. The computational domain is defined as $\Omega = [0,1]\times[-0.25,0.75]$, partitioned into the Darcy region $\Omega_D = [0,1]\times[0,0.75]$ and the Stokes region $\Omega_S = [0,1]\times[-0.25,0]$. The model parameters are set to unity: $\alpha=1, \nu=1, g=1, z=0$, and $\mathbb{K}=k\mathbf{I}$ with $k=1$.

Unlike the dissipative dynamics in Example 1, here we construct a time-periodic manufactured solution to test the model's ability to capture persistent oscillatory dynamics (limit cycles). The exact solutions satisfying the interface conditions (including the Beavers-Joseph law) are:
\begin{align}
	\phi_D &= [\,2 - \pi\sin(\pi x)\,]\,[\,-y + \cos(\pi(1-y))\,]\cos(2\pi t), \\
	\vec{u}_S &= 
	\begin{bmatrix}
		x^2y^2 + e^{-y} \\
		-\tfrac{2}{3}xy^3 + 2 - \pi\sin(\pi x)
	\end{bmatrix}
	\cos(2\pi t), \\
	p &= -[\,2 - \pi\sin(\pi x)\,]\cos(2\pi y)\cos(2\pi t).
\end{align}

\textbf{Implementation Details:}
The discretization and network architecture (ViT depth, patch size) remain consistent with Example 1 to ensure comparability. A critical distinction in this experiment lies in the parameterization of the Koopman generator $\mathbf{A}$. Since the underlying dynamics are periodic, the system's energy should be conserved (or oscillate) in the asymptotic limit rather than decay. Therefore, we enforce the generator $\mathbf{A}$ to be dominated by skew-symmetric blocks (representing rotations). The diagonal decay rates are initialized close to zero, encouraging the learning of purely imaginary eigenvalues $\lambda \approx \pm i\omega$. This enforces an energy-conserving harmonic evolution in the latent space.
The loss weights are adjusted to $w_{u_1}=3.0, w_{u_2}=3.0, w_{p}=1.0$, and $w_{\phi}=1.0$. Figure \ref{fig:sd_nsd_loss} shows the convergence history, demonstrating a stable and efficient optimization process.

\textbf{Prediction Accuracy and Stability:} 
Tables \ref{tab:nsd}, and \ref{tab:nsd_dt02} detail the quantitative performance for time steps $\Delta t \in \{0.1, 0.2\}$. A comparative summary is provided in Table \ref{tab:nsd_dt_two_times}. We can find that the ViT-K framework achieves high accuracy and remarkable stability. For the time step $\Delta t = 0.1$, the relative $L^2$ error for velocity components ($u_1, u_2$) remains below $0.9\%$ throughout both the training ($t \le 1.0$) and extrapolation ($t > 1.0$) phases. Notably, unlike the purely dissipative case, the errors do not grow monotonically but exhibit a stable, bounded behavior. This confirms that the rotation-based Koopman operator successfully locks onto the phase and frequency of the periodic driver $\cos(2\pi t)$. 
\begin{table}
	\footnotesize
	\centering
	\caption{Quantitative assessment of prediction quality ($\Delta t = 0.1$).}
	\label{tab:nsd}
	\setlength{\tabcolsep}{5pt}
	\begin{tabular}{lccccc}
		\toprule
		Channel & Time & MSE & MAE & Max Error & Relative Error (\%) \\
		\midrule
		\multirow{3}{*}{$u_1$}
		& $t=0.0$ & $6.00 \times 10^{-6}$ & $1.90 \times 10^{-3}$ & $1.02 \times 10^{-2}$ & $0.29$ \\
		& $t=1.0$ & $5.94 \times 10^{-6}$ & $1.90 \times 10^{-3}$ & $9.94 \times 10^{-3}$ & $0.29$ \\
		& $t=2.0$ & $6.01 \times 10^{-6}$ & $1.90 \times 10^{-3}$ & $1.01 \times 10^{-2}$ & $0.29$ \\
		\midrule
		\multirow{3}{*}{$u_2$}
		& $t=0.0$ & $1.87 \times 10^{-5}$ & $3.46 \times 10^{-3}$ & $1.66 \times 10^{-2}$ & $0.88$ \\
		& $t=1.0$ & $1.87 \times 10^{-5}$ & $3.46 \times 10^{-3}$ & $1.75 \times 10^{-2}$ & $0.89$ \\
		& $t=2.0$ & $1.90 \times 10^{-5}$ & $3.49 \times 10^{-3}$ & $1.72 \times 10^{-2}$ & $0.89$ \\
		\midrule
		\multirow{3}{*}{$p$}
		& $t=0.0$ & $1.35 \times 10^{-5}$ & $2.82 \times 10^{-3}$ & $1.56 \times 10^{-2}$ & $1.07$ \\
		& $t=1.0$ & $1.33 \times 10^{-5}$ & $2.80 \times 10^{-3}$ & $1.45 \times 10^{-2}$ & $1.08$ \\
		& $t=2.0$ & $1.37 \times 10^{-5}$ & $2.85 \times 10^{-3}$ & $1.48 \times 10^{-2}$ & $1.08$ \\
		\midrule
		\multirow{3}{*}{$\phi$}
		& $t=0.0$ & $2.77 \times 10^{-5}$ & $4.09 \times 10^{-3}$ & $2.98 \times 10^{-2}$ & $1.49$ \\
		& $t=1.0$ & $2.74 \times 10^{-5}$ & $4.06 \times 10^{-3}$ & $2.87 \times 10^{-2}$ & $1.50$ \\
		& $t=2.0$ & $2.82 \times 10^{-5}$ & $4.12 \times 10^{-3}$ & $2.85 \times 10^{-2}$ & $1.51$ \\
		\bottomrule
	\end{tabular}
	\vspace{-2em}
\end{table}

\begin{table}
	\footnotesize
	\centering
	\caption{Quantitative assessment of prediction quality ($\Delta t = 0.2$).}
	\label{tab:nsd_dt02}
	\setlength{\tabcolsep}{5pt}
	\begin{tabular}{lccccc}
		\toprule
		Channel & Time & MSE & MAE & Max Error & Relative Error (\%) \\
		\midrule
		\multirow{3}{*}{$u_1$}
		& $t=0.0$ & $6.04 \times 10^{-6}$ & $1.87 \times 10^{-3}$ & $1.35 \times 10^{-2}$ & $0.29$ \\
		& $t=1.0$ & $6.74 \times 10^{-5}$ & $6.22 \times 10^{-3}$ & $6.04 \times 10^{-2}$ & $1.25$ \\
		& $t=2.0$ & $6.03 \times 10^{-6}$ & $1.87 \times 10^{-3}$ & $1.34 \times 10^{-2}$ & $0.29$ \\ 
		\midrule
		\multirow{3}{*}{$u_2$}
		& $t=0.0$ & $2.99 \times 10^{-5}$ & $4.17 \times 10^{-3}$ & $2.46 \times 10^{-2}$ & $1.12$ \\
		& $t=1.0$ & $1.02 \times 10^{-4}$ & $8.05 \times 10^{-3}$ & $4.56 \times 10^{-2}$ & $2.70$ \\
		& $t=2.0$ & $2.99 \times 10^{-5}$ & $4.17 \times 10^{-3}$ & $2.47 \times 10^{-2}$ & $1.12$ \\ 
		\midrule
		\multirow{3}{*}{$p$}
		& $t=0.0$ & $1.98 \times 10^{-5}$ & $3.15 \times 10^{-3}$ & $3.00 \times 10^{-2}$ & $1.30$ \\
		& $t=1.0$ & $6.81 \times 10^{-5}$ & $6.39 \times 10^{-3}$ & $4.05 \times 10^{-2}$ & $3.14$ \\
		& $t=2.0$ & $1.98 \times 10^{-5}$ & $3.15 \times 10^{-3}$ & $2.99 \times 10^{-2}$ & $1.30$ \\ 
		\midrule
		\multirow{3}{*}{$\phi$}
		& $t=0.0$ & $3.82 \times 10^{-5}$ & $4.57 \times 10^{-3}$ & $3.62 \times 10^{-2}$ & $1.75$ \\
		& $t=1.0$ & $9.74 \times 10^{-5}$ & $7.55 \times 10^{-3}$ & $5.77 \times 10^{-2}$ & $3.65$ \\
		& $t=2.0$ & $3.83 \times 10^{-5}$ & $4.57 \times 10^{-3}$ & $3.64 \times 10^{-2}$ & $1.75$ \\
		\bottomrule
	\end{tabular}
	\vspace{-2em}
\end{table}

\textbf{Impact of Temporal Sampling Step $\Delta t$:}
Table \ref{tab:nsd_dt_two_times} reveals an important contrast to Example 1 with different $\Delta t$. Here, the smallest time step ($\Delta t = 0.05$) yields the highest accuracy (e.g., $0.23\%$ relative error for $u_1$), whereas the largest step ($\Delta t = 0.2$) shows a degradation in accuracy (rising to $1.25\%$ at $t=1.0$).
This behavior is consistent with the simulation of oscillatory systems: larger time steps introduce phase dispersion errors in the discrete matrix exponential $e^{\Delta t \mathbf{A}}$, leading to a gradual phase drift over time. Thus, for periodic dynamics, a finer temporal resolution is preferable to maintain phase fidelity.
\begin{table}
	\footnotesize
	\centering
	\caption{Effect of temporal sampling step $\Delta t$ on prediction accuracy for the Navier--Stokes--Darcy model. Results are reported at interpolation ($t=1.0$) and extrapolation ($t=2.0$) stages.}
	\label{tab:nsd_dt_two_times}
	\setlength{\tabcolsep}{2pt} 
	\small
	\begin{tabular}{lcccccc}
		\toprule
		& \multicolumn{6}{c}{Temporal sampling step $\Delta t$} \\
		\cmidrule(lr){2-7}
		Channel
		& \multicolumn{3}{c}{$t = 1.0$ (Interpolation)}
		& \multicolumn{3}{c}{$t = 2.0$ (Extrapolation)} \\
		\cmidrule(lr){2-4} \cmidrule(lr){5-7}
		& $0.05$ & $0.1$ & $0.2$
		& $0.05$ & $0.1$ & $0.2$ \\
		\midrule
		$u_1$ (MSE)
		& \textbf{$3.85\times10^{-6}$} & $5.94\times10^{-6}$ & $6.74\times10^{-5}$
		& \textbf{$3.93\times10^{-6}$} & $6.01\times10^{-6}$ & $6.03\times10^{-6}$ \\
		$u_1$ (Rel.\%)
		& \textbf{$0.23$} & $0.29$ & $1.25$
		& \textbf{$0.23$} & $0.29$ & $0.29$ \\
		\midrule
		$u_2$ (MSE)
		& \textbf{$1.28\times10^{-5}$} & $1.87\times10^{-5}$ & $1.02\times10^{-4}$
		& \textbf{$1.31\times10^{-5}$} & $1.90\times10^{-5}$ & $2.99\times10^{-5}$ \\
		$u_2$ (Rel.\%)
		& \textbf{$0.74$} & $0.89$ & $2.70$
		& \textbf{$0.74$} & $0.89$ & $1.12$ \\
		\midrule
		$p$ (MSE)
		& \textbf{$8.23\times10^{-6}$} & $1.33\times10^{-5}$ & $6.81\times10^{-5}$
		& \textbf{$8.41\times10^{-6}$} & $1.37\times10^{-5}$ & $1.98\times10^{-5}$ \\
		$p$ (Rel.\%)
		& \textbf{$0.85$} & $1.08$ & $3.14$
		& \textbf{$0.85$} & $1.08$ & $1.30$ \\
		\midrule
		$\phi$ (MSE)
		& \textbf{$2.30\times10^{-5}$} & $2.74\times10^{-5}$ & $9.74\times10^{-5}$
		& \textbf{$2.36\times10^{-5}$} & $2.82\times10^{-5}$ & $3.83\times10^{-5}$ \\
		$\phi$ (Rel.\%)
		& \textbf{$1.38$} & $1.50$ & $3.65$
		& \textbf{$1.38$} & $1.51$ & $1.75$ \\
		\bottomrule
	\end{tabular}
	\vspace{-2em}
\end{table}

\textbf{Comparison with Baselines:}
	We conducted a comparative analysis with two representative baseline models: the FNO \cite{li2020fourier} and the ConvLSTM \cite{shi2015convolutional}. The FNO baseline is configured with 12 Fourier modes and 4 spectral convolution layers (width 64), utilizing global frequency-domain operations to capture spatiotemporal dynamics. The ConvLSTM baseline employs 64 hidden channels and optimized recurrent convolutional units. All models were trained using the AdamW optimizer with a cosine annealing learning rate scheduler for 300 epochs.

Figure \ref{fig:ViT-K+FNO+LSTM} shows the time-domain evolution of the velocity component $u_1$ at a representative point $(0.5,0.08)$ within the free-flow region. Although all three models successfully fit the training interval ($0–1$ s), FNO and ConvLSTM began to deviate during the extrapolation phase ($1–3$ s). As shown in Table \ref{tab:ViT-K+FNO+LSTM}, ViT-K produces smaller errors in the prediction intervals compared to the baseline model.
%
\begin{figure}[ht] 
	\centering
	\begin{minipage}[ht]{0.45\textwidth}
		\centering
		\includegraphics[width=\textwidth]{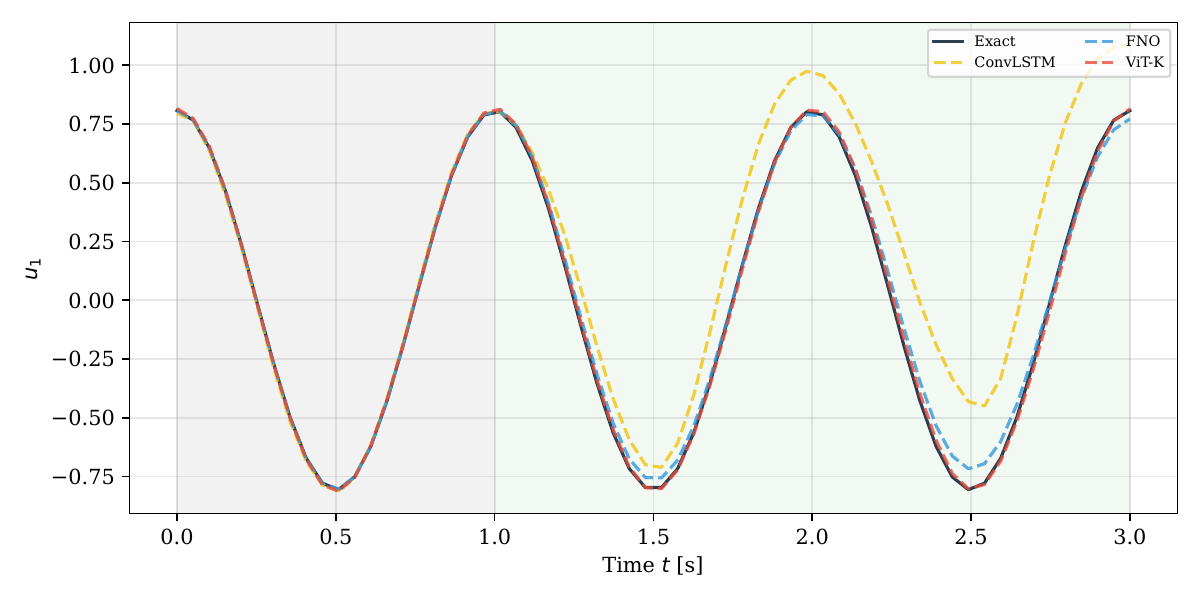}
		\caption{Temporal evolution of velocity $u_1$ at $(0.5, -0.08)$ using ViT-K, FNO, and ConvLSTM.}
		\label{fig:ViT-K+FNO+LSTM}
	\end{minipage}
	\hfill
	\begin{minipage}[ht]{0.52\textwidth}
		\centering
		\captionof{table}{Quantitative comparison of extrapolation errors ($1$--$3$ s) for ViT-K, FNO and ConvLSTM.}
		\label{tab:ViT-K+FNO+LSTM}
		\setlength{\tabcolsep}{3pt}
		\footnotesize 
		\begin{tabular}{lccc}
			\toprule
			Metric & ViT-K & FNO & ConvLSTM \\
			\midrule
			MSE & $5.06 \times 10^{-5}$ & $2.23 \times 10^{-4}$ & $5.19 \times 10^{-3}$ \\
			\bottomrule
		\end{tabular}
	\end{minipage}
	\vspace{-3em}
\end{figure}

\subsection{Example 3: Flow in Heterogeneous Karst Media}
\label{subsec:ex3_karst}

To evaluate ViT-K's adaptability to non-convex boundaries and strong free-flow/porous-medium coupling, we consider the Y-shaped karst aquifer benchmark (Figure \ref{ex3-domain}). Unlike idealized rectangular domains, this case features a high-permeability fracture conduit embedded within a low-permeability porous matrix, challenging the framework to simultaneously resolve the Beavers--Joseph (BJ) interface condition along irregular polygons and extrapolate temporal dynamics. The domain $\Omega = [0,1]^2$ is partitioned into a Navier--Stokes free-flow region ($\Omega_S$) and a Darcy porous matrix ($\Omega_D$). The highly heterogeneous interaction occurs at the sharp interface $\Gamma = \partial\Omega_S \cap \partial\Omega_D$, where the BJ condition is explicitly imposed to ensure momentum and mass consistency. The system is driven by piecewise velocity Dirichlet conditions on the fracture boundaries $\partial\Omega_S \setminus \Gamma$ with global mass balance enforced. A homogeneous hydraulic head $\phi = 0$ is maintained on the exterior porous boundary $\partial\Omega_D \setminus \Gamma$. Nondimensional parameters are fixed as $T=1.0$, $\Delta t=0.2$, and $\alpha=\nu=g=S_0=k=1$.
\begin{figure}[H]
	\centering
	\begin{minipage}[c]{0.45\linewidth}
		\centering
		\includegraphics[width=0.6\linewidth]{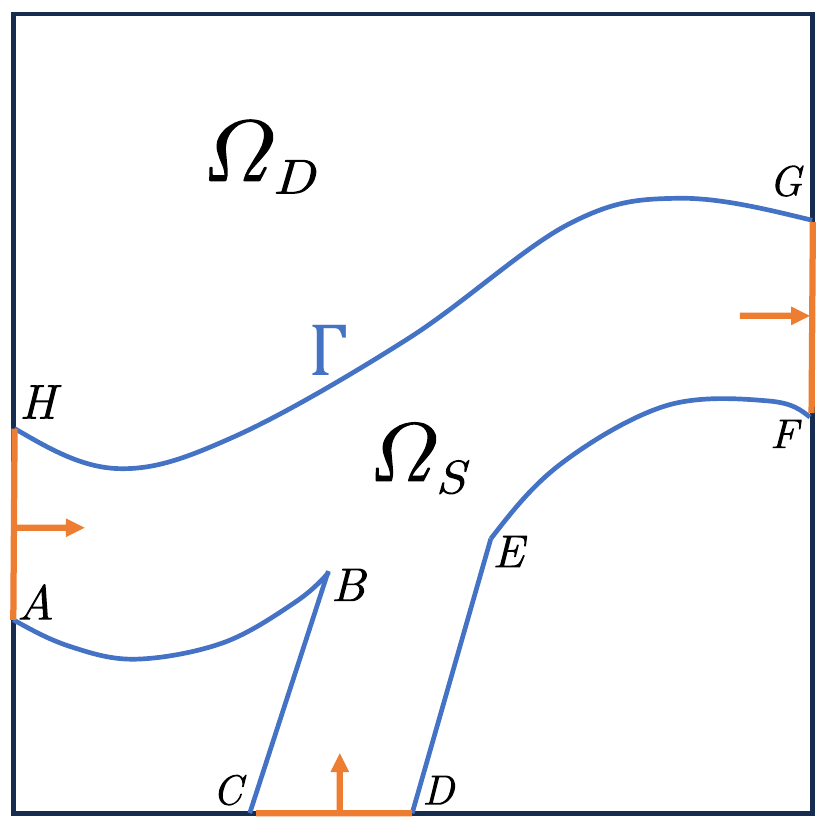}
		\caption{A sketch of flow in a Karst aquifer with curved interfaces for Example 3.}
		\label{ex3-domain}
	\end{minipage}
	\hfill 
	\begin{minipage}[c]{0.52\linewidth}
		\begin{equation*}
			\mathbf{u}_S =
			\begin{cases}
				(\omega_1, 0), & \text{on } \overline{HA} \quad (\text{Inlet 1}),\\
				(0, \omega_1), & \text{on } \overline{CD} \quad (\text{Inlet 2}),\\
				(\omega_2, 0), & \text{on } \overline{FG} \quad (\text{Outlet}).
			\end{cases}
		\end{equation*}
	\end{minipage}
    \vspace{-2em}
\end{figure}

\textbf{Dataset and Configuration:} Reference solutions are computed via a high-fidelity finite element method (FEM) solver using $P_2$-$P_1$ elements for $\mathbf{u}_S, p$ and $P_2$ for $\phi_D$. The training set utilizes sparse temporal snapshots at $t \in \{0.2, 0.4, 0.6, 0.8\}$, reserving $t=1.0$ to rigorously test extrapolation. To isolate the impact of geometric complexity, the ViT-K architecture remains strictly identical to the configuration in Section~\ref{subsec:ex1_stokes_darcy}.

\textbf{Results and Analysis:} Figure \ref{fig:karst} compares the ViT-K predictions and the FEM reference at t = 1.0 s. Within the fracture domain ($\Omega_S$), the accurate resolution of high-velocity jets and flow redirection confirms that the model effectively encodes complex geometric constraints via the region mask. Crucially, streamlines transition smoothly across the interface $\Gamma$, indicating that ViT-K implicitly learns the Beavers-Joseph condition without explicit physics-informed losses, while simultaneously resolving the low-velocity seepage in the porous matrix ($\Omega_D$). Table \ref{tab:karst_error} summarizes the quantitative metrics at t = 1.0 s. Although relative errors are slightly elevated compared to idealized rectangular domains, deviations localize primarily near the irregular boundaries. Overall, ViT-K successfully handles non-smooth heterogeneous geometries and preserves physical consistency during temporal extrapolation.
\begin{table}[htbp]
	\footnotesize
	\centering
	\caption{Quantitative error metrics for Example \ref{subsec:ex3_karst} at extrapolation time $t=1.0\,\mathrm{s}$.}
	\label{tab:karst_error}
	\setlength{\tabcolsep}{5pt}
	\begin{tabular}{lcccc}
		\toprule
		Channel & MSE & RMSE & MAE & Rel. $L^2$ Error  \\
		\midrule
		$u_1$ & $2.32 \times 10^{-3}$ & $4.81 \times 10^{-2}$ & $3.63 \times 10^{-2}$ & $9.70\%$  \\
		$u_2$ & $6.71 \times 10^{-4}$ & $2.59 \times 10^{-2}$ & $2.05 \times 10^{-2}$ & $8.17\%$  \\
		\bottomrule
	\end{tabular}
	\vspace{-1em}
\end{table}
\begin{figure}
	\centering
	\includegraphics[width=0.8\linewidth]{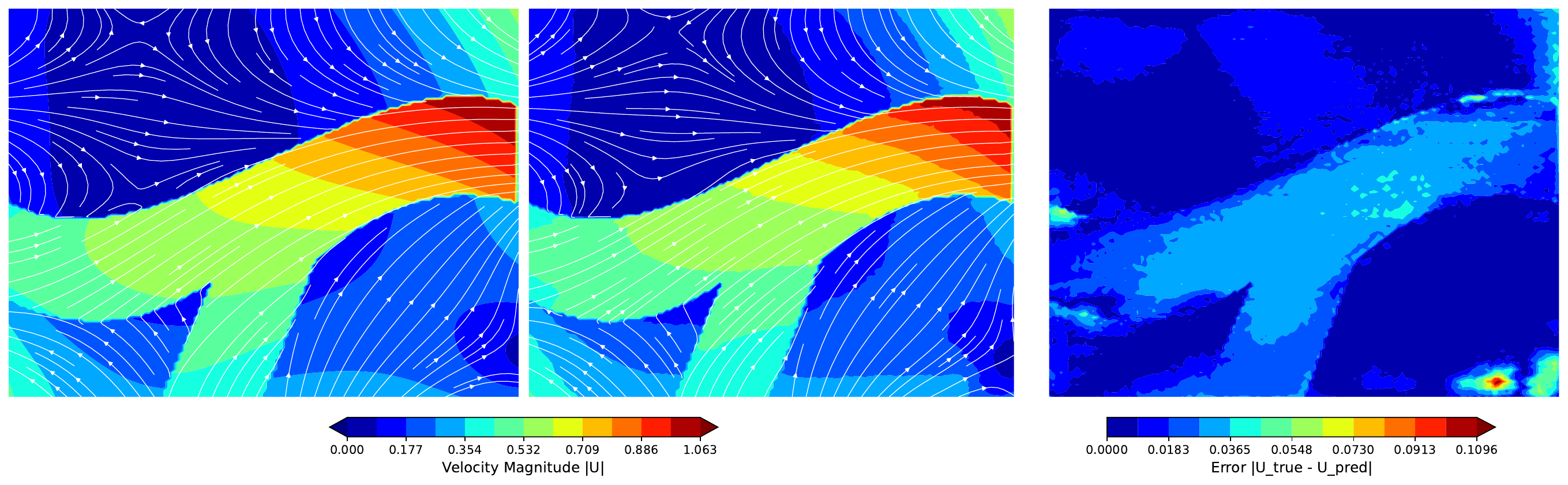}
	\caption{Predictive performance on the Karst aquifer model at extrapolation time $t=1.0\,\mathrm{s}$. Panels display the Ground Truth (left), ViT-K Prediction (middle), and Absolute Error (right) for the velocity magnitude.}
	\label{fig:karst}
	\vspace{-2em}
\end{figure}

\subsection{Example 4: Pulsatile Hemodynamics in Bifurcating Vessels}
\label{subsec:ex4_hemodynamics}
To evaluate ViT-K's capacity to resolve high-frequency pulsatile flows in heterogeneous media, we simulate a bifurcating vessel network embedded in porous biological tissue. The domain $\Omega = [0, 1]^2$ comprises a fluid lumen ($\Omega_S$) branching from a common inlet along cubic centerlines $y_{c1,2}(x) = 0.8 - \{0.4, 1.4\}(x+0.15)^3$, surrounded by a porous tissue matrix ($\Omega_D$). Asymmetric vessel widths ($w_1=0.145, w_2=0.15$) induce flow heterogeneity. Physical parameters reflect typical physiological conditions: density $\rho=1.0$, dynamic viscosity $\mu=0.035$, tissue storage $S_0=0.01$, and permeability $K=10^{-6}$.

\textbf{Governing Equations and Boundary Conditions:} The Navier--Stokes equations (in $\Omega_S$) and Darcy's law (in $\Omega_D$) are coupled via a unified weak form:
\begin{equation}
	F(\boldsymbol{w}; \boldsymbol{v}) = \int_{\Omega_S} \mathcal{R}_{\text{NS}}(\boldsymbol{w}, \boldsymbol{v}) \, \mathrm{d}\boldsymbol{x} + \int_{\Omega_D} \mathcal{R}_{\text{Darcy}}(\boldsymbol{w}, \boldsymbol{v}) \, \mathrm{d}\boldsymbol{x} = 0,
\end{equation}
implicitly enforcing kinematic continuity and normal stress balance across the interface. $\boldsymbol{w}=(\boldsymbol{u}, p)$.  A time-dependent pulsatile velocity profile is imposed at the inlet ($x=0$):
\begin{equation}
	\boldsymbol{u}_{\text{in}}(0, y, t) = U_{\max} \left( 1 - \left(\frac{y-y_c}{w_{\text{in}}}\right)^2 \right) \cdot \alpha(t) \cdot \left( 1 + 0.4\sin(2\pi f t) \right) \boldsymbol{e}_x,
\end{equation}
where peak velocity $U{\max}=1.8$, inlet width $w_{\text{in}}=0.12$, and frequency $f=2.5\,\mathrm{Hz}$ (period $T=0.4\,\mathrm{s}$). To mitigate initial numerical shocks, a cosine ramp-up function $\alpha(t)$ smoothly scales the inflow from $0$ to $1$ during the first cardiac cycle ($t \le 0.4\,\mathrm{s}$). Outlets ($x=1, y=0$) are set to stress-free conditions ($p=0$), with no-slip boundaries on the remaining walls.

\textbf{Dataset Generation:} High-fidelity reference data is generated using a monolithic mixed FEM solver (FEniCS) on a $128 \times 128$ grid, employing $P_2$--$P_1$ Taylor--Hood elements and implicit Euler time integration ($\Delta t = 0.02\,\mathrm{s}$). To eliminate transients, the initial warm-up cycle ($t < 0.4\,\mathrm{s}$) is discarded. The model is trained on the subsequent 2.5 cycles ($t \in [0.4, 1.4]\,\mathrm{s}$) and tested on the following 2.5 cycles ($t \in [1.4, 2.4]\,\mathrm{s}$). This temporal extrapolation strategy rigorously verifies that ViT-K learns the underlying periodic dynamics rather than overfitting a specific trajectory.

\textbf{Non-Autonomous ViT-K Implementation:} To accommodate the strong external pulsatile forcing, we extend our framework to a non-autonomous Koopman formulation:
\begin{equation}
	\boldsymbol{z}(t) = \mathcal{K}_{\text{nat}}(\boldsymbol{z}_0, t) + \mathcal{K}_{\text{force}}(\boldsymbol{u}_{\text{ext}}(t)).
\end{equation}
The natural dynamics $\mathcal{K}_{\text{nat}}$ is governed by a block-diagonal generator with stable eigenvalues ($\mathrm{Re}(\lambda) < 0$), capturing inherent system relaxation. The forced response $\mathcal{K}_{\text{force}}$ injects cardiac cycle features $[\sin(2\pi f t), \cos(2\pi f t)]$ via a two-layer MLP, allowing synchronization with the driving frequency. The ViT encoder utilizes an $8 \times 8$ patch size, an embedding dimension of 192, and 6 layers. Training spans 2000 epochs with a composite loss balancing velocity MSE ($w_u=10$), robust Huber loss for pressure ($w_p=20$), global pressure mean constraint ($w_{\text{bias}}=100$), and an $H^1$ spatial gradient penalty ($w_{\text{grad}}=5$).

\textbf{Results and Discussion:} Figures \ref{fig:vessel_comp_t1.5} and \ref{fig:vessel_flow_evolution} evaluate ViT-K against the FEM reference during the extrapolation phase ($t \ge 1.5\,\mathrm{s}$). The model accurately resolves complex pulsatile hemodynamics—including rhythmic vortex modulation, flow separation at the bifurcation, and significant pressure drops—maintaining strict physical consistency up to $t=2.0\,\mathrm{s}$. Unlike autoregressive solvers prone to temporal drift, ViT-K remains phase-locked to the pulsatile driver well beyond the training horizon. This confirms that the non-autonomous Koopman formulation successfully isolates the periodic forced response from transient natural dynamics.

Regarding computational efficiency (Table \ref{tab:time_comparison_custom}), ViT-K achieves a $5.2\times$ acceleration over the high-fidelity FEM baseline ($194\,\mathrm{s}$ vs. $1008\,\mathrm{s}$) for a $5\,\mathrm{s}$ physical simulation. This computational advantage scales linearly for longer horizons, as ViT-K maintains a constant inference cost per time step, effectively bypassing the stringent stability constraints and ill-conditioning issues often encountered in traditional solvers.
\begin{figure}[h]
	\centering
	\includegraphics[width=0.7\linewidth]{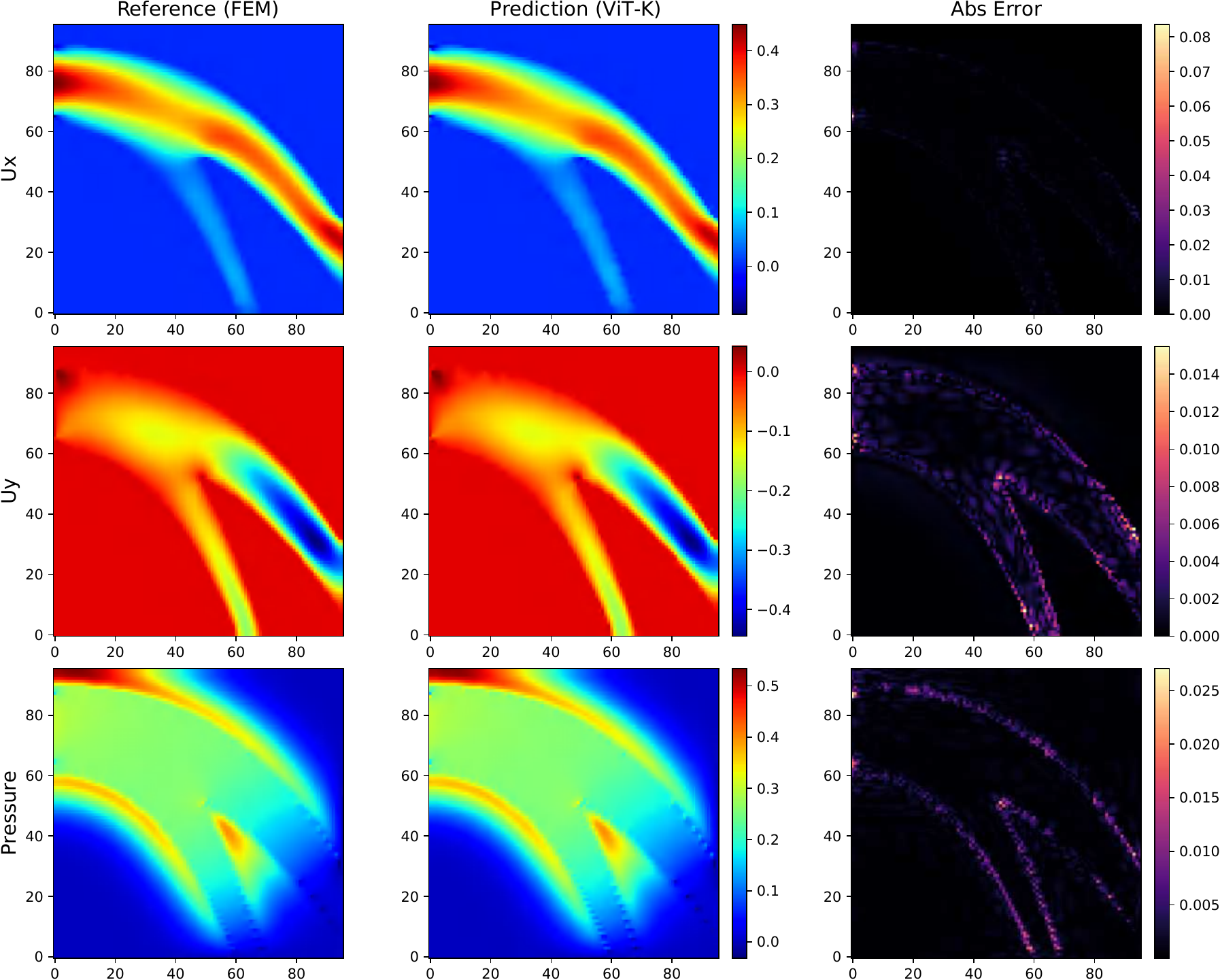}
	\caption{Comparison at $t=1.5\,\mathrm{s}$ (Extrapolation Phase). Long-term stability is maintained due to the non-autonomous forcing formulation.}
	\label{fig:vessel_comp_t1.5}
	\vspace{-2em}
\end{figure}
\begin{figure}[h]
	\centering
	\begin{subfigure}[b]{0.4\textwidth}
		\centering
		\includegraphics[width=\linewidth]{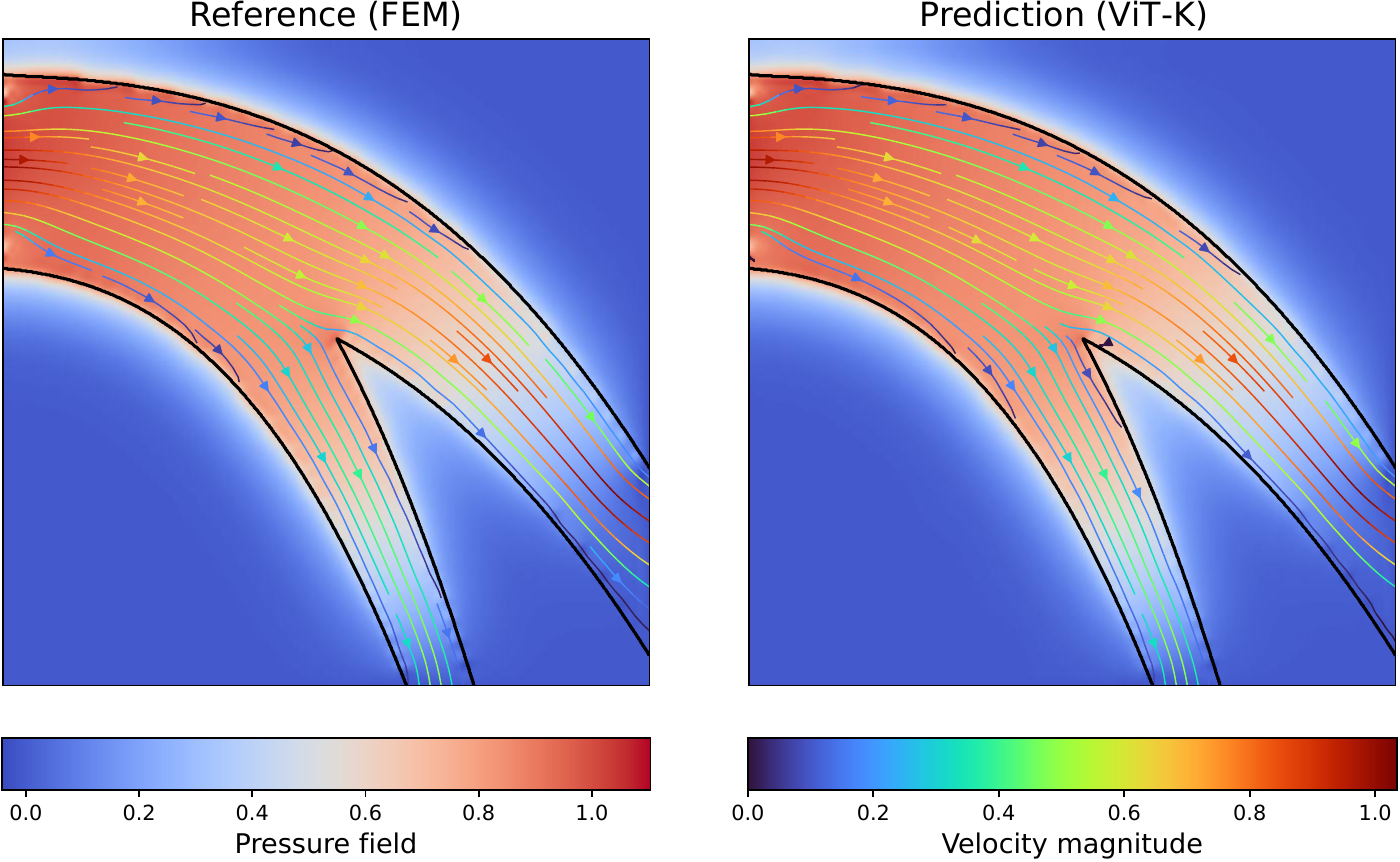}
		\caption{$t=0.5\,\mathrm{s}$}  
		\label{fig:vessel_flow_evolution0.5}
	\end{subfigure}
	\begin{subfigure}[b]{0.4\textwidth}
		\centering
		\includegraphics[width=\linewidth]{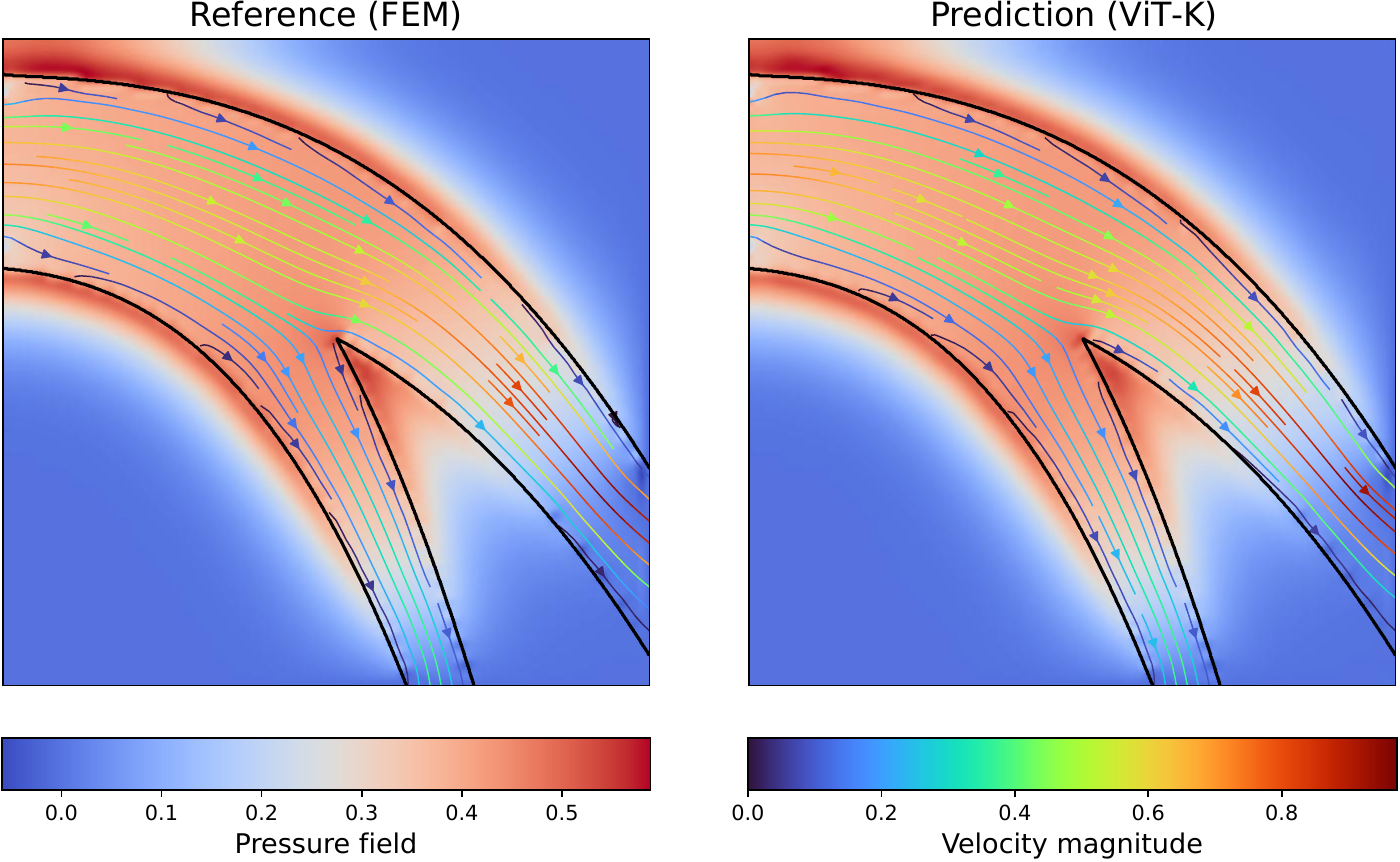}
		\caption{$t=1.0\,\mathrm{s}$}
		\label{fig:vessel_flow_evolution1.0}
	\end{subfigure}
	\vspace{0.2em}  
	\begin{subfigure}[b]{0.4\textwidth}
		\centering
		\includegraphics[width=\linewidth]{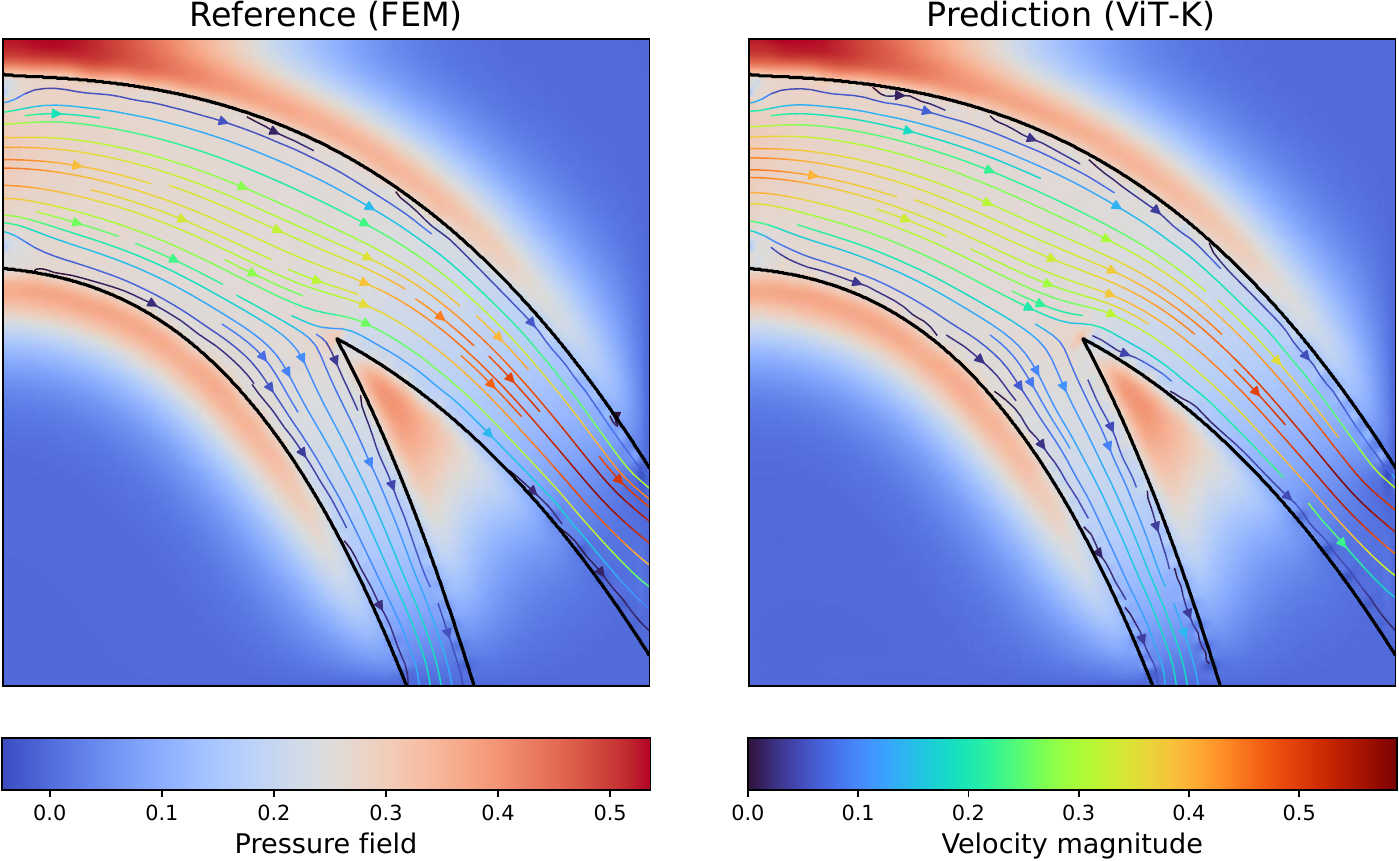}
		\caption{$t=1.5\,\mathrm{s}$}
		\label{fig:vessel_flow_evolution1.5}
	\end{subfigure}
	\begin{subfigure}[b]{0.4\textwidth}
		\centering
		\includegraphics[width=\linewidth]{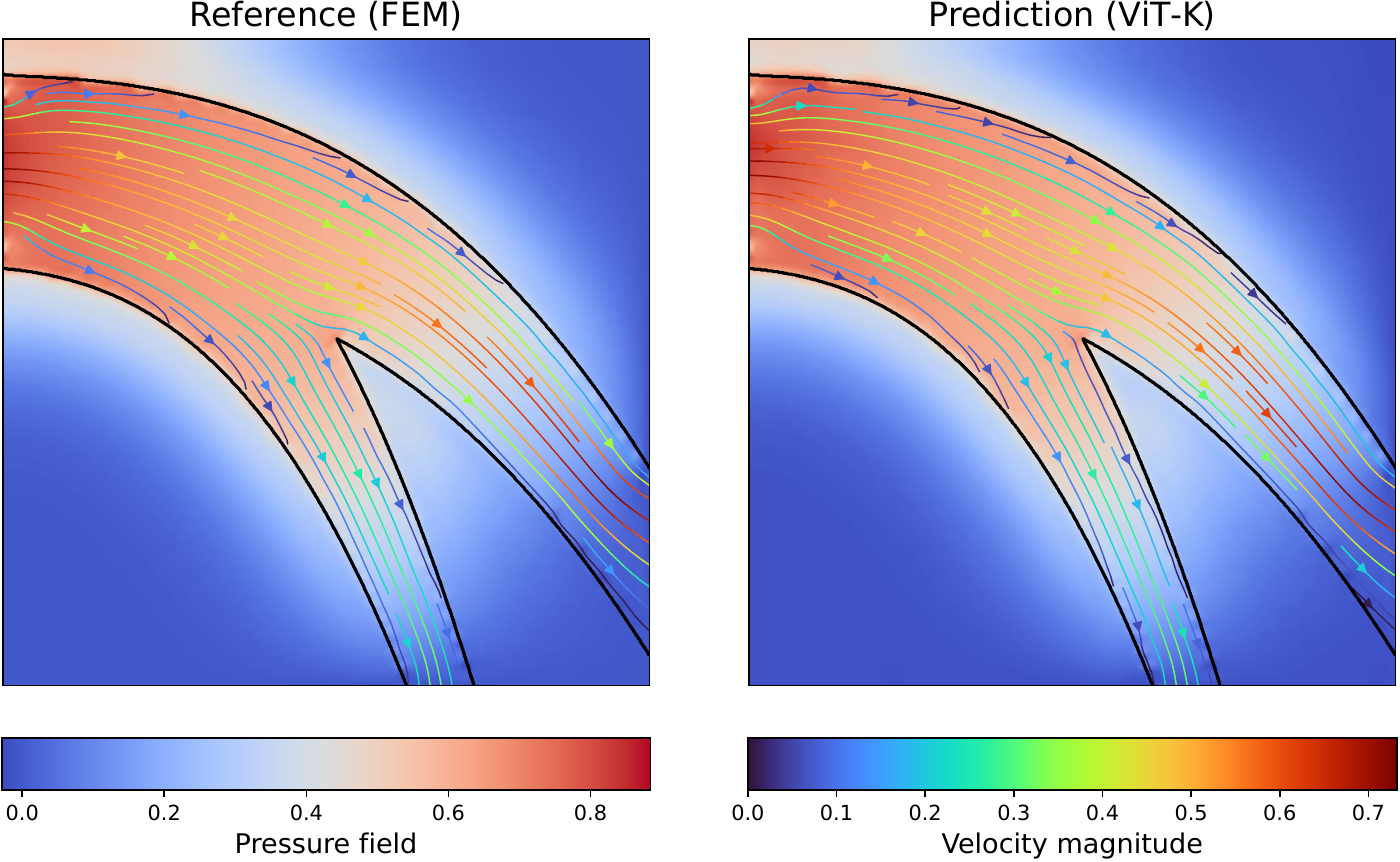}
		\caption{$t=2.0\,\mathrm{s}$}
		\label{fig:vessel_flow_evolution2.0}
	\end{subfigure}
	\vspace{-2em}
	\caption{Evolution of flow field streamlines and pressure over multiple cardiac cycles. The model captures the phase-dependent variations in vortex structure.}
	\label{fig:vessel_flow_evolution}
	\vspace{-3em}
\end{figure}
{\footnotesize
\begin{table}
	\centering
	\caption{Computational Efficiency (FEM vs. ViT-K) for simulating $5\,\mathrm{s}$ of physical time.}
	\renewcommand{\arraystretch}{1.2}
	\setlength{\tabcolsep}{6pt}
	\begin{tabular}{lcc}
		\toprule
		Method & Wall-Clock Time (s) & Speed-up Factor \\
		\midrule
		FEM & 1008.23 & $1.0\times$ \\
		ViT-K & 193.88 & $\mathbf{5.2\times}$ \\
		\bottomrule
	\end{tabular}
	\parbox{0.6\textwidth}{\footnotesize \textit{Note:} Evaluation performed on the same hardware. ViT-K time includes data training.}
	
	\label{tab:time_comparison_custom}
	\vspace{-2em}
\end{table}}

\subsection{Few-Shot Learning and Long-Term Extrapolation Capabilities}
\label{sec:long_term_stability}

A distinguishing feature of the proposed ViT-K framework is its data efficiency and asymptotic stability. Unlike purely autoregressive deep learning models (e.g., LSTMs or standard RNNs) which often suffer from exponential error divergence, our structured Koopman formulation guarantees bounded error growth.
In this section, we rigorously validate these properties through extreme extrapolation tests on the periodic Navier-Stokes-Darcy system (Example 2) and the pulsatile vessel flow (Example 4).

\subsubsection{Asymptotic Stability in Periodic Flows (Example 2)}
To rigorously assess the extrapolation limits of our framework, we conduct a few-shot learning experiment on the Navier--Stokes--Darcy system (Section \ref{subsec:ex2_nsd}). Trained on a sparse dataset of merely 10 snapshots ($t \in [0, 1.0]$, $\Delta t=0.1$), the model is tasked to predict the system dynamics up to $t=100.0$, effectively achieving an extreme $100\times$ extrapolation beyond the training horizon.

Figure \ref{fig:nsd-long-100} tracks the Root Mean Squared Error (RMSE) evolution over the extreme extrapolation horizon up to $t=100.0\,\mathrm{s}$. Crucially, the phase-averaged error follows a strictly linear trajectory with a slope of $10^{-4}$. This empirical result perfectly corroborates the theoretical bound derived in Theorem \ref{thm:global_error}, confirming that the stable Koopman formulation successfully suppresses exponential divergence even when extrapolating from minimal training data.
\begin{figure}[h]
	\centering
	\includegraphics[width=0.8\linewidth]{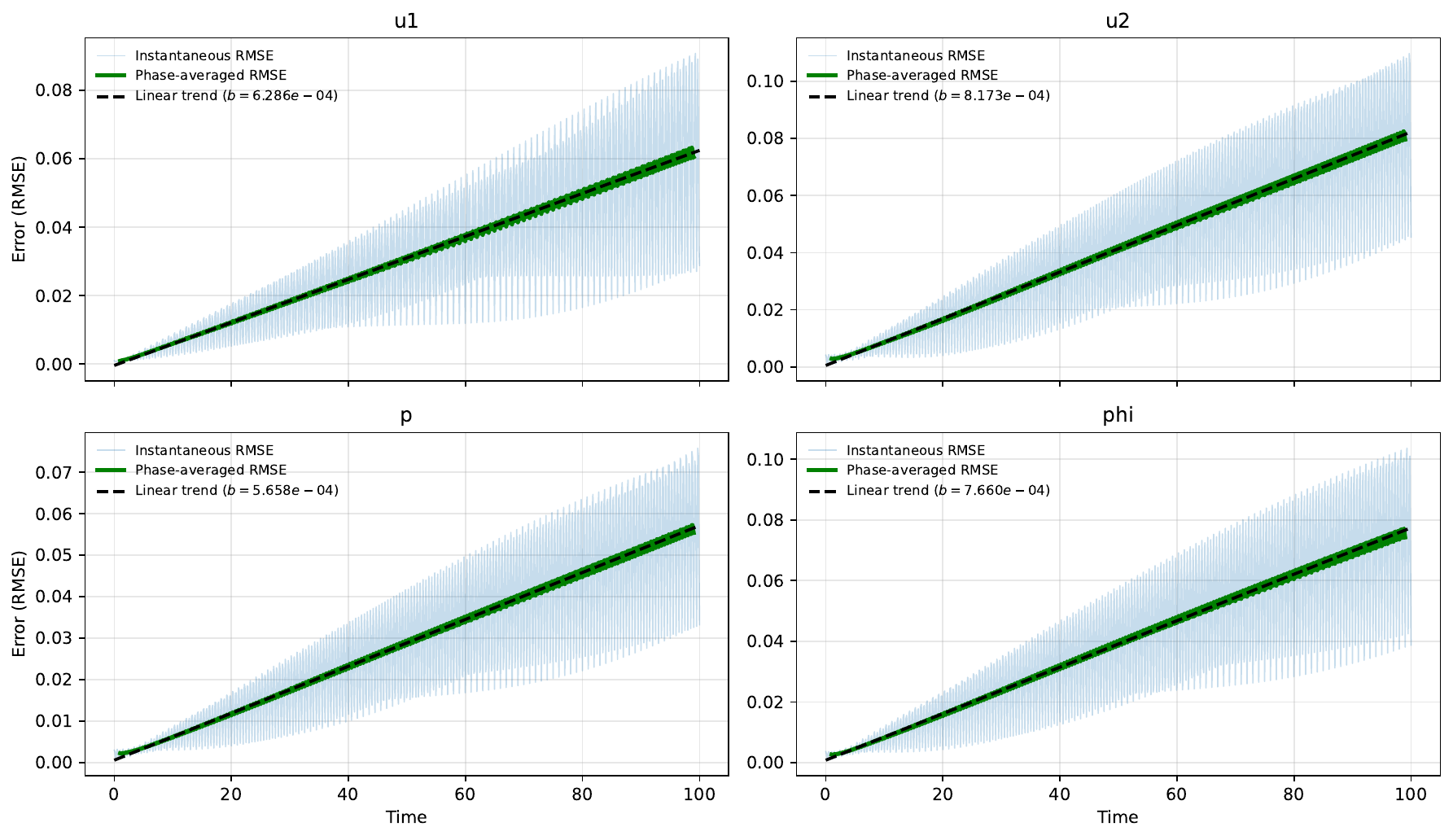}
	\caption{Long-term RMSE evolution up to $t=100\,\mathrm{s}$. The oscillatory error bounds closely track a linear growth trend, empirically validating the stability guarantees of Theorem \ref{thm:global_error}.}
	\label{fig:nsd-long-100}\vspace{-2em}
\end{figure}

Table \ref{tab:nsd_time_metrics} quantitatively substantiates this long-term stability by comparing predictions at $t=10.0\,\mathrm{s}$ and $t=100.0\,\mathrm{s}$ horizons. Notably, when extending the prediction horizon by a factor of 10, the relative $L^2$ errors merely double (e.g., from $2.07\%$ to $3.47\%$ for $u_1$, and $2.76\%$ to $5.22\%$ for $\phi$), strictly avoiding exponential amplification. Furthermore, the maximum pointwise errors remain remarkably stable (bounded around $7 \times 10^{-2}$) across all variables. This confirms that while minor projection errors ($\epsilon_{proj}$) accumulate over hundreds of recursive steps, they do not induce system blow-up, ensuring globally stable and reliable macroscale reconstructions.
\begin{table}
	\footnotesize
	\centering
	\caption{Quantitative comparison of prediction accuracy at $t=10\mathrm{s}$ and $t=100\mathrm{s}$}
	\label{tab:nsd_time_metrics}
	\setlength{\tabcolsep}{5pt}
	\begin{tabular}{lccccc}
		\toprule
		Time & Channel & MSE & MAE & Max Error & Rel-L2 (\%) \\
		\midrule
		\multirow{4}{*}{$t=10\mathrm{s}$}
		& $u_1$  & $4.21 \times 10^{-5}$ & $4.51 \times 10^{-3}$ & $6.23 \times 10^{-2}$ & $2.07$ \\
		& $u_2$  & $5.94 \times 10^{-5}$ & $5.74 \times 10^{-3}$ & $5.03 \times 10^{-2}$ & $2.23$ \\
		& $p$    & $3.85 \times 10^{-5}$ & $4.47 \times 10^{-3}$ & $4.19 \times 10^{-2}$ & $2.56$ \\
		& $\phi$ & $4.75 \times 10^{-5}$ & $5.01 \times 10^{-3}$ & $5.89 \times 10^{-2}$ & $2.76$ \\
		\midrule
		\multirow{4}{*}{$t=100\mathrm{s}$}
		& $u_1$  & $7.88 \times 10^{-5}$ & $6.59 \times 10^{-3}$ & $6.97 \times 10^{-2}$ & $3.47$ \\
		& $u_2$  & $1.93 \times 10^{-4}$ & $1.04 \times 10^{-2}$ & $7.02 \times 10^{-2}$ & $4.01$ \\
		& $p$    & $1.12 \times 10^{-4}$ & $7.73 \times 10^{-3}$ & $5.15 \times 10^{-2}$ & $4.37$ \\
		& $\phi$ & $3.70 \times 10^{-4}$ & $9.88 \times 10^{-3}$ & $5.76 \times 10^{-2}$ & $5.22$ \\
		\bottomrule
	\end{tabular}\vspace{-2em}
\end{table}
%

\subsubsection{Phase Consistency in Pulsatile Flows (Example 4)}
To rigorously assess robustness in non-autonomous hemodynamics, we extrapolate the branching vessel dynamics far beyond the $1.0\,\mathrm{s}$ (2.5 cycles) training window. Specifically, we extend the prediction horizon to $t=5.0\,\mathrm{s}$ and $t=50.0\,\mathrm{s}$, corresponding to 12.5 and 125 full cardiac cycles, respectively.

As shown in Figure \ref{fig:vessel_flow_longterm}, ViT-K exhibits exceptional phase-locking capability. Despite the complex fluid--porous heterogeneity, the predicted flow avoids phase drift entirely over extended horizons. Flow diversion ratios and vortex shedding frequencies remain physically consistent with the reference hemodynamics. These results validate the efficacy of the non-autonomous Koopman formulation in strictly capturing the invariant spectral properties of the cardiovascular attractor.

\begin{figure}[h]
	\centering
	\begin{subfigure}[b]{0.48\linewidth}
		\centering
		\includegraphics[width=\linewidth]{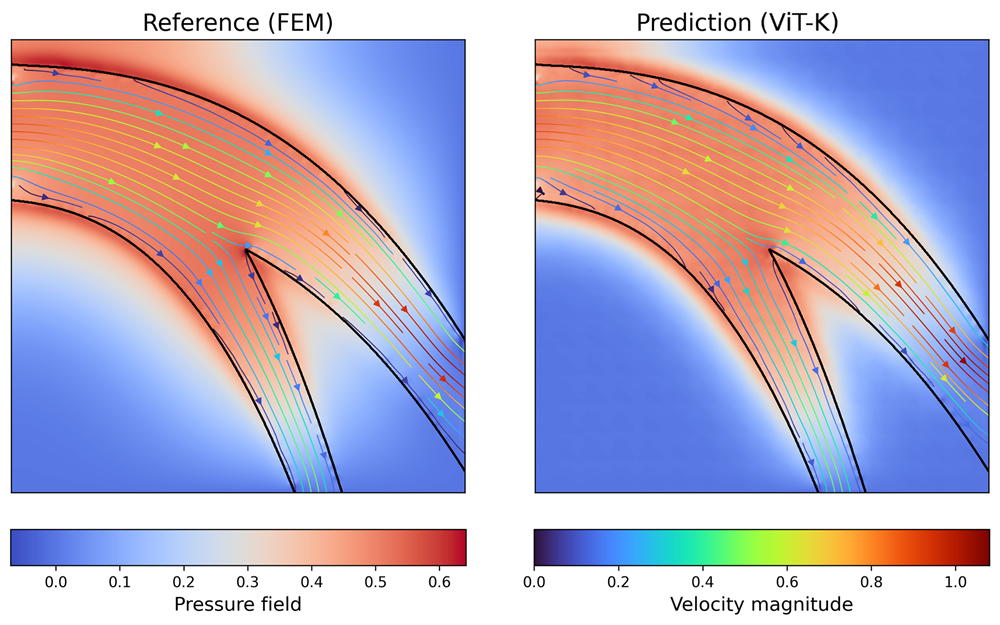}
		\caption{Flow visualization at \( t = 5.0 \) (12.5th cardiac cycle).}
		\label{fig:vessel_flow_evolution5.0}
	\end{subfigure}
	\hfill
	\begin{subfigure}[b]{0.48\linewidth}
		\centering
		\includegraphics[width=\linewidth]{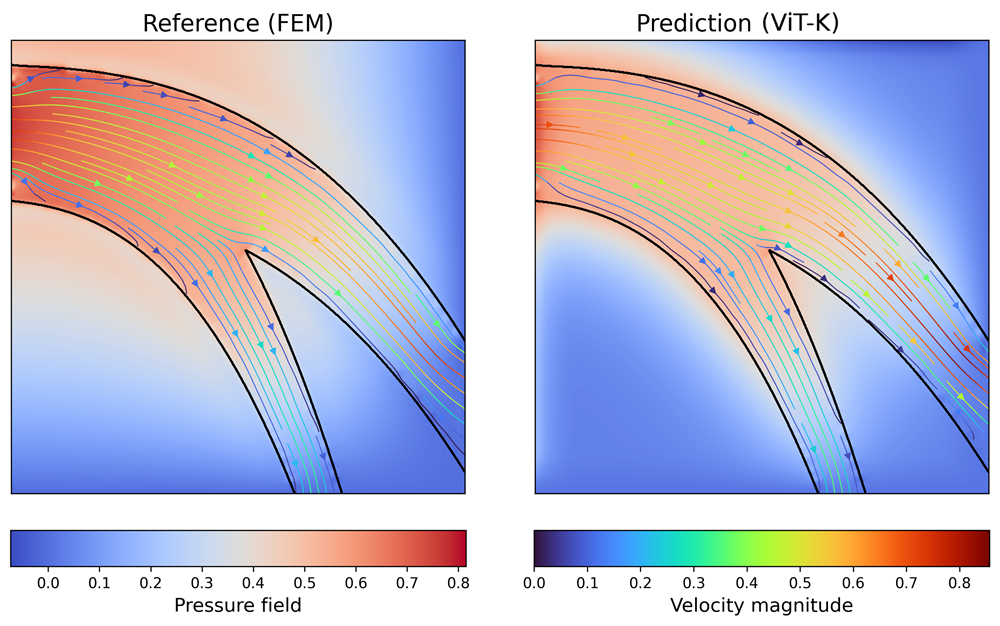}
		\caption{Flow visualization at \( t = 50.0 \) (125th cardiac cycle).}
		\label{fig:vessel_flow_evolution50.0}
	\end{subfigure}\vspace{-1em}
	\caption{Long-term flow predictions for the bifurcating vessel. ViT-K preserves pulsatile dynamics and strict phase consistency without non-physical dissipation over extreme extrapolation horizons.}
	\label{fig:vessel_flow_longterm}\vspace{-3em}
\end{figure}

\subsection{Robustness Analysis under Measurement Noise}
To evaluate ViT-K's robustness against measurement noise, which is a ubiquitous challenge in clinical hemodynamics, we perturb the clean validation data $\mathbf{u}_{\text{ref}}$ with additive white Gaussian noise (AWGN) $\tilde{\mathbf{u}}(x, y, t) = \mathbf{u}_{\text{ref}}(x, y, t) + \delta \cdot \|\mathbf{u}_{\text{ref}}\|_\infty \cdot \xi, \quad \xi \sim \mathcal{N}(0, 1),$ where $\|\mathbf{u}_{\text{ref}}\|_\infty$ denotes the maximum field amplitude, $\delta \in \{0.1, 0.15\}$ imposes $10\%$ and $15\%$ noise intensities, and $\xi$ follows a standard normal distribution. Such high-frequency stochastic oscillations severely compromise local gradient regularity, posing significant challenges for solvers reliant on well-posed, smooth initial conditions.
\begin{figure}[h]
	\centering
	\begin{subfigure}[b]{0.49\linewidth}
		\centering
		\includegraphics[width=\linewidth]{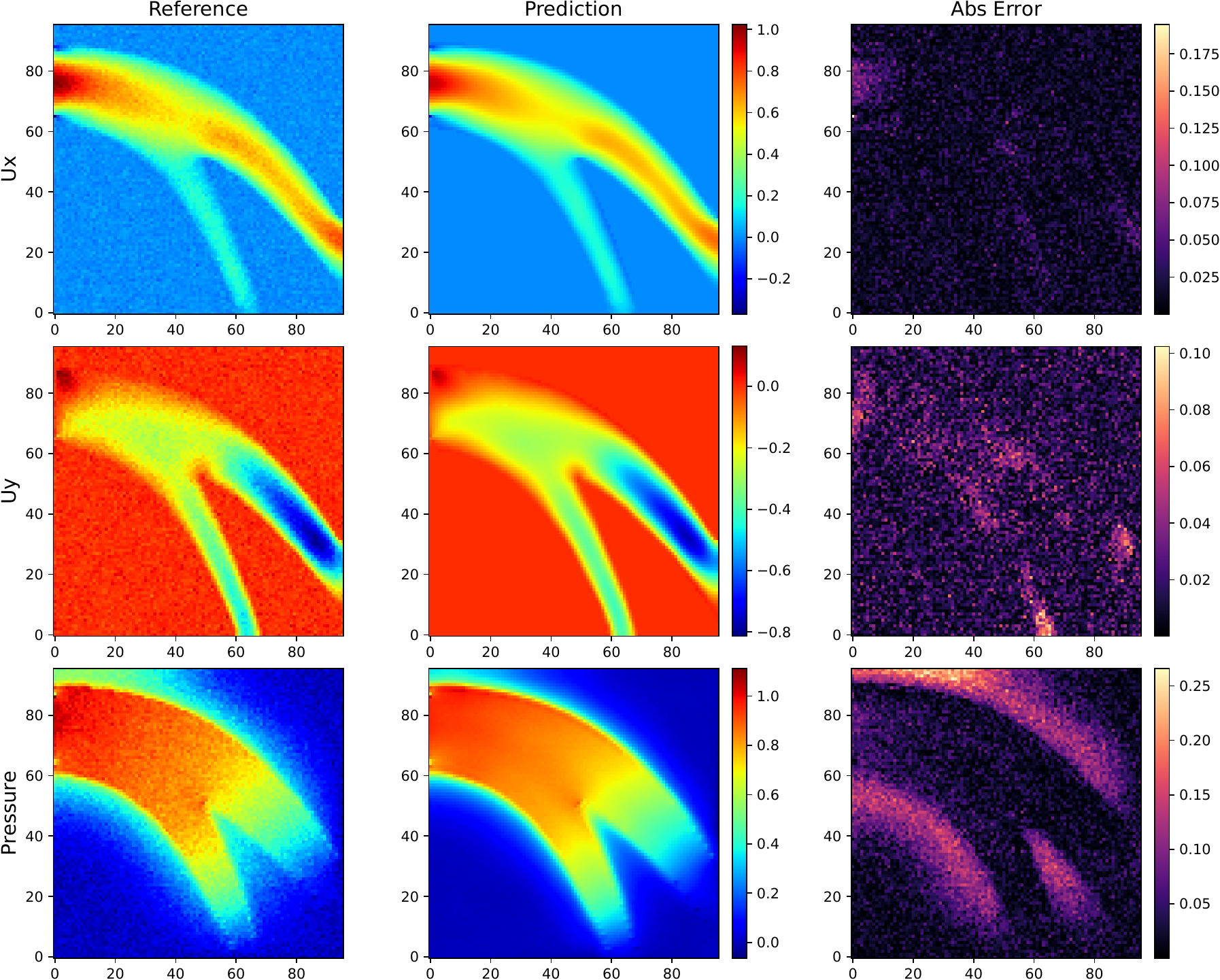}
		\caption{$10\%$ Gaussian noise}
		\label{fig:robustness_10}
	\end{subfigure}
	\hfill
	\begin{subfigure}[b]{0.49\linewidth}
		\centering
		\includegraphics[width=\linewidth]{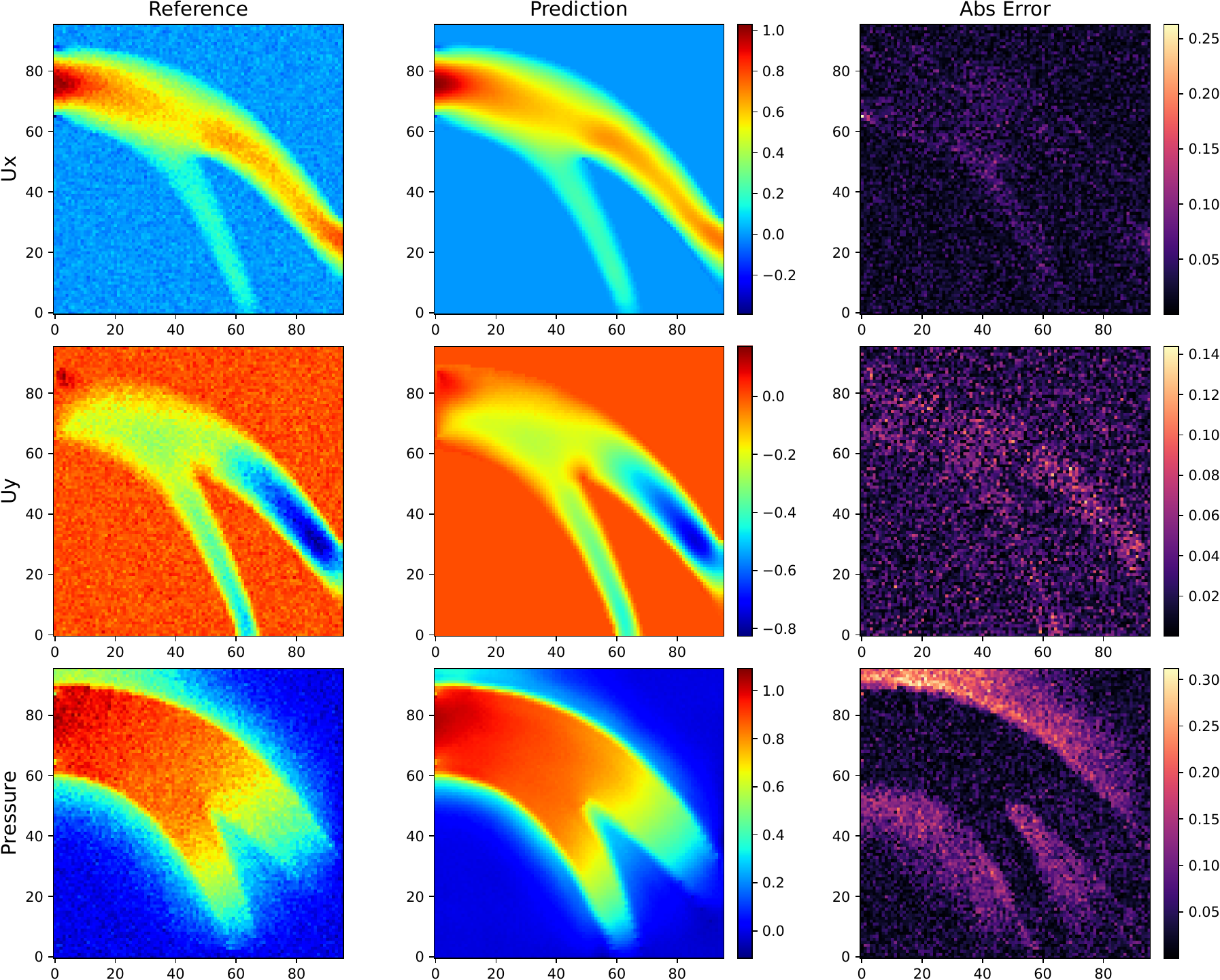}
		\caption{$15\%$ Gaussian noise}
		\label{fig:robustness_15}
	\end{subfigure}\vspace{-1.5em}
	\caption{Comparison of the FEM reference with Gaussian noise (left column), the ViT-K approximation (middle column), and the absolute error (right column) at $t=2.5\,\mathrm{s}$ under different noise levels (testing set).}
	\label{fig:robustness_noise}\vspace{-2em}
\end{figure}

\begin{figure}[h]
	\centering
	\includegraphics[width=0.80\linewidth]{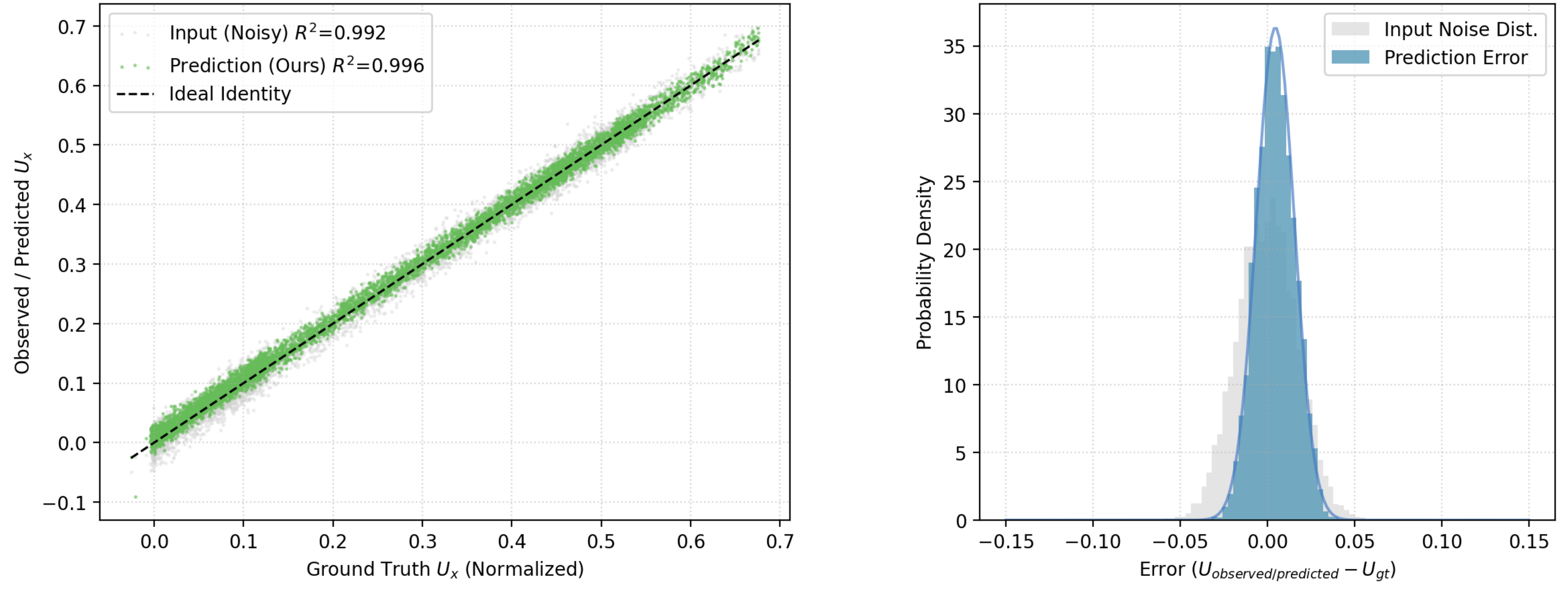}
	\caption{Prediction performance of physical quantity $\mathbf{u}$ (left) and probability density distribution of prediction errors (right) under noisy input.}
	\label{fig:robustness3}\vspace{-3em}
\end{figure}
Figures \ref{fig:robustness_noise} and \ref{fig:robustness3} demonstrate the model's robust denoising capacity. Qualitatively, although the input fields $\tilde{\mathbf{u}}$ exhibit severe non-physical fluctuations, the ViT-K predictions at both noise levels achieve remarkable physical reconstruction, yielding smooth fields structurally consistent with the ground truth (Fig. \ref{fig:robustness_noise}). Quantitatively (Fig. \ref{fig:robustness3}), ViT-K narrows the pointwise scatter (improving the coefficient of determination from $R^2=0.992$ to $0.996$) and transforms the broad input noise into a sharp, leptokurtic error distribution centered at zero, confirming substantial variance reduction.

This intrinsic regularization is governed by the spectral properties of the learned Koopman operator. Because high-frequency AWGN is largely orthogonal to the dominant eigenfunctions defining the system's low-dimensional inertial manifold, ViT-K acts as an implicit spectral filter. It projects the noisy state onto the learned manifold of valid PDE solutions, effectively attenuating non-physical stochastic components rather than overfitting to pointwise corrupted data.

\section{Conclusion}
\label{sec:conclusion}
In this work, we present ViT-K, a few-shot framework that synergizes Vision Transformers (ViT) with Koopman operator theory to simulate coupled fluid–porous media flows. By explicitly enforcing stability constraints within the Koopman generator, ViT-K overcomes the exponential error accumulation typical of recurrent networks, theoretically and numerically guaranteeing linear error growth. This critical property enables reliable temporal extrapolation up to $100\times$ the training horizon using only sparse data. Spatially, the ViT encoder leverages self-attention to accurately resolve heterogeneous Beavers–Joseph interface conditions across complex geometries (e.g., Karst aquifers, bifurcating vessels) while demonstrating intrinsic robustness against measurement noise. Future research will extend ViT-K to three-dimensional irregular geometries via Graph Neural Networks and address high-Reynolds-number flows requiring adaptive spectral pruning for continuous spectra. Ultimately, ViT-K offers a robust, interpretable paradigm that effectively bridges data-driven computational efficiency with physical reliability for multiphysics modeling.

\bibliographystyle{siamplain}
\bibliography{ViT-K-refs}
\end{document}

%% file: ex_shared.tex
\usepackage{lipsum}
\usepackage{amsfonts}
\usepackage{graphicx}
\usepackage{epstopdf}
\usepackage{algorithmic}
\ifpdf
  \DeclareGraphicsExtensions{.eps,.pdf,.png,.jpg}
\else
  \DeclareGraphicsExtensions{.eps}
\fi

\newsiamremark{remark}{Remark}
\newsiamremark{hypothesis}{Hypothesis}
\crefname{hypothesis}{Hypothesis}{Hypotheses}
\newsiamthm{claim}{Claim}
\newsiamremark{fact}{Fact}
\crefname{fact}{Fact}{Facts}

\headers{ViT-K}{M. Chen, C. Qiu, Z. Mao and M. Xu}

\title{ViT-K: A Few-Shot Learning Model for Coupled Fluid-Porous Media Flows with Interface Conditions\thanks{Submitted to the SIAM J. Sci. Comput.
		\funding{Research work was partially supported by Zhejiang Provincial Natural Science Foundation of China under Grant No. LMS26A010009, NSFC Grant No. 12201327 and Natural Science Foundation of Ningbo Municipality Grant No. 2022J087.}}}

\author{Mengjia Chen\thanks{School of Mathematics and Statistics, Ningbo University, Ningbo, Zhejiang 315211, P. R. China 
		(\email{236002309@nbu.edu.cn},\email{qiuchangxin@nbu.edu.cn}).}
	\and Changxin Qiu\footnotemark[2]
	\and Zhiping Mao\thanks{School of Mathematical Sciences, Eastern Institute of Technology, Ningbo, Zhejiang 315200, P. R. China (\email{zmao@eitech.edu.cn}).}
	\and Menghui Xu\thanks{Faculty of Mechanical Engineering and Mechanics, Ningbo University, Ningbo, Zhejiang 315211, P. R. China (xumenghui@nbu.edu.cn).}}

\usepackage{amsopn}
